\newtheorem{thm}{Theorem}[section]
\newtheorem{cor}[thm]{Corollary}
\newtheorem{lem}[thm]{Lemma}
\newtheorem{prop}[thm]{Proposition}
\newtheorem{conj}[thm]{Conjecture}
\newtheorem{defn}[thm]{Definition}
\theoremstyle{remark}
\newcommand{\bddots}{%
  \mathinner{\mkern1mu\raise\p@\vbox{\kern7\p@\hbox{.}}\mkern2mu
    \raise4\p@\hbox{.}\mkern2mu\raise7\p@\hbox{.}\mkern1mu}}
 \def\tr{{\triangle}}
\def\f{\frac}
 \def\a{{\alpha}}
 \def\b{{\beta}}
 \def\g{{\gamma}}
 \def\t{{\theta}}
 \def\l{{\lambda}}
 \def\la{{\langle}}
 \def\ra{{\rangle}}
 \def\CA{{\mathcal A}}
 \def\CI{{\mathcal I}}
 \def\CL{{\mathcal L}}
 \def\CM{{\mathcal M}}
 \def\CV{{\mathcal V}}
 \def\CC{{\mathbb C}}
 \def\NN{{\mathbb N}}
 \def\PP{{\mathbb P}}
 \def\QQ{{\mathbb Q}}
 \def\RR{{\mathbb R}}
        \def\rank{\operatorname{rank}}
\def\tr{\mathsf{t}}
\newcommand{\wh}{\widehat}
\begin{document}

\title[Characteristic polynomials and Gaussian cubature]
{Generalized characteristic polynomials and Gaussian cubature rules}

\author{Yuan Xu}
\address{Department of Mathematics\\ University of Oregon\\
    Eugene, Oregon 97403-1222.}\email{yuan@uoregon.edu}

\date{\today}
\thanks{The work was supported in part by NSF Grant DMS-1106113}
\keywords{Generalized characteristic polynomials, orthogonal polynomials, Toeplitz matrix, Gaussian cubature rule}
\subjclass[2000]{33C45, 33C50, 42C10}

\begin{abstract}
For a family of near banded Toeplitz matrices, generalized characteristic polynomials are shown to be orthogonal
polynomials of two variables, which include the Chebyshev polynomials of the second kind on the deltoid as a 
special case. These orthogonal polynomials possess maximal number of real common zeros, which generate a 
family of Gaussian cubature rules in two variables. 
\end{abstract}

\maketitle

\section{Introduction}
\setcounter{equation}{0}

Characteristic polynomials for non-square matrices are defined and studied in \cite{AS}, which can be viewed as
the multidimensional generalization of the usual characteristic polynomials. We describe a connection between
these polynomials and multivariate orthogonal polynomials, which leads to a family of orthogonal polynomials in
two variables that has maximal number of real distinct common zeros. The latter serve as nodes of a new family 
of Gaussian cubature rules. 

We start with the definition of generalized characteristic polynomials. For $m, n \in \NN$, let $\CM(m, n)$ denote the space of complex valued matrices of size $m \times n$
and let $\CM(n) = \CM(n,n)$. For $A \in \CM(n)$, the eigenvalues of $A$ are the zeros of the characteristic 
polynomial $\det (x \CI_n - A)$, where $\CI_n$ denotesÄ the identity matrix in $\CM(n)$. The notion of the 
characteristic polynomial has been extended to several variables in \cite{AS, SS}. 
\iffalse
In two variable, for example, let $A \in \CM(n,n+1)$ and consider the $n \times (n+1)$ matrix 
$$
   A(z_0, z_1) = z_0 \left[I_n, 0\right] + \left[0 I_n \right] - A.
$$
For $0 \le k \le n+1$, let $A_{\wh k}(z_0,z_1)$ denote the $n\times n$ matrix formed by $A(z_0,z_1)$ minus its  
$k$-th column. The generalized characteristic polynomials of $A$ are defined by
$$
   P_k(z_0,z_1) = \det A_{\wh k} (z_0,z_1), \qquad 0 \le k \le n.
$$ 
\fi
For $s = 0, 1,\ldots, n$ define the $s$-unit matrix 
$$
     \CI_s: = (\delta_{s+i-j}) \in \CM(m,m+n). 
$$
As an example, for $n=1$, $\CI_0 = [\CI_m \, 0]$ and $\CI_1 = [0\,\,  \CI_m]$. Let $A$ be a matrix in 
$\CM(m,m+n)$. Define the $m \times (m+n)$ matrix  
$$
      A(z_0,\ldots, z_n) := A + z_0 \CI_0 + \ldots + z_n \CI_n.
$$
For $I = \{i_1,\ldots, i_m\}$, where $1 \le i_1 <  \cdots < i_m \le m +n$, denote by $A_I (z_0,\ldots, z_n)$ the
submatrix of $A(z_0, \ldots, z_n)$ formed by its $m$ columns indexed by $I$. The generalized characteristic 
polynomials of $A$ are then defined by 
\begin{equation}\label{eq:P_I}
    P_I (z_0,\ldots, z_n): = \det A_I(z_0, \ldots, z_n). 
\end{equation}
Let $|I|$ denote the cardinality of $I$. It is easy to see that the total degree of $P_I(z_0,\ldots,z_n)$ is $|I|$.
Furthermore, for $m \in \NN_0$, there are $\binom{m+n}{m}$ polynomials $P_I(z_0,\ldots,z_n)$ with $|I| = m$ 
and these polynomials are linearly independent. Moreover, the following proposition was proved in \cite{AS}.

\begin{prop} \label{prop:basic}
The set of common zeros of all $P_I(z_0,\ldots,z_n)$ with $|I| = m$ is a finite subset of $\CC^{n+1}$ 
of cardinality $\binom{m+n}{n+1}$ counting multiplicities. 
\end{prop}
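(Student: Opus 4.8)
The plan is to pass to projective space, recognize the common zero locus as a determinantal variety, prove its finiteness by a dimension argument that uses the shift structure of the $\CI_s$ at infinity, and finally read off the cardinality from the Thom--Porteous formula.

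First I would homogenize. Introduce an auxiliary variable $t$ and set $\wh A(t,z_0,\ldots,z_n):= tA+z_0\CI_0+\cdots+z_n\CI_n$, a matrix whose entries are homogeneous linear forms on $\PP^{n+1}$ with coordinates $[t:z_0:\cdots:z_n]$. Its maximal minors $\wh P_I$ are homogeneous of degree $m$ and satisfy $\wh P_I(1,z)=P_I(z)$, so the common zeros of the $P_I$ in $\CC^{n+1}$ are precisely the points of
$$
  Z:=\{\,[t:z]\in\PP^{n+1}:\rank \wh A(t,z)\le m-1\,\}
$$
that lie in the chart $\{t\neq 0\}$; equivalently, $Z$ together with the data of how $Z$ meets $\{t=0\}$ controls everything.

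The crucial step is to show $Z\cap\{t=0\}=\emptyset$. On $\{t=0\}$ the matrix $\wh A$ specializes to $\sum_{s=0}^n z_s\CI_s$, whose $(i,j)$ entry is $z_{j-i}$ (with $z_k:=0$ for $k\notin\{0,\ldots,n\}$); this is exactly the $m\times(m+n)$ matrix, in the monomial bases, of the multiplication map $q(\zeta)\mapsto\bigl(\sum_{s=0}^n z_s\zeta^s\bigr)q(\zeta)$ from polynomials of degree $<m$ to polynomials of degree $<m+n$. For $(z_0,\ldots,z_n)\neq 0$ the symbol $\sum_s z_s\zeta^s$ is a nonzero polynomial, hence this map is injective and the matrix has full rank $m$. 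Thus no point of $\{t=0\}$ lies in $Z$. Since a closed subvariety of $\PP^{n+1}$ of dimension $\geq 1$ meets every hyperplane, $Z$ must be finite; in particular every common zero of the $P_I$ already lies in $\CC^{n+1}$, and no zeros escape to infinity. (Note that it is this finiteness, which would fail for a general parametrized family of matrices, that pins down the count.)

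Finally, because $Z$ is finite it has the expected codimension $n+1$, so the Thom--Porteous (Giambelli) formula applies to the bundle map $\wh A\colon E:=\CO_{\PP^{n+1}}^{\,m+n}\to F:=\CO_{\PP^{n+1}}(1)^{\,m}$. Writing $h=c_1(\CO(1))$ one has $c(F-E)=(1+h)^m$, so $c_k(F-E)=\binom{m}{k}h^k$, and the class of the rank-$\le(m-1)$ locus is the $(n+1)\times(n+1)$ determinant $\det\bigl(c_{1+j-i}(F-E)\bigr)_{1\le i,j\le n+1}=\binom{m+n}{n+1}\,h^{n+1}$, the last equality being the evaluation $h_{n+1}(h,\ldots,h)=\binom{m+n}{n+1}h^{n+1}$ ($m$ equal arguments); in particular this class is nonzero, so $Z\neq\emptyset$. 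Since the $\wh P_I$ cut out a Cohen--Macaulay (Eagon--Northcott) scheme of dimension $0$, the Thom--Porteous multiplicities coincide with the local lengths, so the number of common zeros counted with multiplicity is $\binom{m+n}{n+1}$. The one place where real work is needed is the rank computation at infinity; the rest is formal once finiteness is in hand. If one prefers to avoid citing Thom--Porteous in the last step, one can instead compute the Hilbert polynomial of $\CC[t,z_0,\ldots,z_n]/(\wh P_I)$ from the Eagon--Northcott resolution, valid precisely because the ideal of maximal minors has maximal codimension $n+1$; its degree is again $\binom{m+n}{n+1}$.
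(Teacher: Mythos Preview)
The paper does not actually prove this proposition: it is quoted from \cite{AS} with the remark ``the following proposition was proved in \cite{AS}'' and no argument is given. So there is nothing in the present paper to compare your proof against, and your write-up stands on its own.

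Your argument is correct. A couple of small remarks. First, the $m\times(m+n)$ matrix $\sum_s z_s\CI_s$ is, strictly speaking, the \emph{transpose} of the matrix of the multiplication map $q\mapsto\bigl(\sum_s z_s\zeta^s\bigr)q$ when one uses column vectors; under the row-vector convention it is the multiplication matrix itself. Either way the rank conclusion is the same, so the step $Z\cap\{t=0\}=\emptyset$ is fine. Second, your justification of the numerical identity via ``$h_{n+1}(h,\ldots,h)=\binom{m+n}{n+1}h^{n+1}$'' is correct but would read more clearly if you distinguished the notation: the determinant $\det\bigl(c_{1+j-i}(F-E)\bigr)_{1\le i,j\le n+1}$ is, by the dual Jacobi--Trudi identity, the Schur function $s_{(n+1)}$ in the Chern roots of $F-E$, i.e.\ the complete homogeneous symmetric function of degree $n+1$ in $m$ copies of the hyperplane class, whose value is $\binom{m+n}{n+1}h^{n+1}$. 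Finally, your appeal to Cohen--Macaulayness via the Eagon--Northcott complex (valid because the maximal minors have the expected codimension $n+1$) is exactly what is needed to pass from the Thom--Porteous class to an honest length count, and the alternative you sketch through the Hilbert polynomial of the Eagon--Northcott resolution is equally valid.

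In short: the paper outsources the proof to \cite{AS}; your projective/determinantal argument supplies a clean self-contained proof, with the only substantive computation being the full-rank check at infinity, which you handle correctly.
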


The set of common zeros of  $\binom{m+n}{n+1}$ randomly selected polynomials can be empty. The significance 
of the above scheme is that it gives a simple construction that warrants a maximal set of finite common zeros. 

We are interested in the connection of these characteristic polynomials and orthogonal polynomials of 
several variables. For this purpose, we consider, for example, an infinite dimensional matrix $\CA$ and define its $m$-th
characteristic polynomials in terms of its main $m \times (m+n)$ submatrix (in the left and upper corner). In the 
case of one variable, characteristic polynomials satisfy a three--term relation if $\CA$ is tri-diagonal with positive
off--diagonal elements, which implies that they are orthogonal polynomials by Favard's theorem. For several 
variables,  it was pointed out in \cite[Example 8]{AS} that 
%it was conjectured in \cite{AS} that the characteristic polynomials 
%$P_k(z_0,\ldots,z_n)$ are weak orthogonal if $\CA$ is a $(n+1)$-banded matrix and $\CA$ with its first row 
%removed is a Toeplitz matrix. 
if $\CA $ is a Toeplitz matrix $\CA = (c_{i-j})$ with $c_{-1} = c_{d}=1$ and all other $c_i=0$, then the generalized
characteristic polynomials are, up to a change of variable, the Chebyshev polynomials of the second kind 
associated to the root system of $\CA_d$ type (\cite{Be, K74}).  These Chebyshev polynomials are orthogonal 
with respect to a real--valued weight function $w$ on a compact domain $\Omega \in \RR^d$ (both $w$ and 
$\Omega$ are explicitly known) and they have been extensively studied (\cite{Be, DX, LX} and the references therein). 

Together with Proposition \ref{prop:basic}, this suggests a way to find orthogonal polynomials that have 
maximal number of common zeros, which is related to the existence of Gaussian cubature rules. The latter is
important for numerical integration and a number of other problem in orthogonal polynomials of 
several variables. 
Let $\Pi_m^d$ denote the space of real-valued polynomials of (total) degree $m$ in $d$ variables. For a given 
integral $\int_{\RR^d} f(x) d\mu$, a cubature rule of degree $M$ in $\RR^d$ with $N$ nodes is a finite sum of function 
evaluations such that 
$$
     \int_{\RR^d} f(x) d\mu = \sum_{k=1}^N \l_k f(x_k), \qquad x_k \in \RR^d, \quad \l_k \ne 0,
$$
for all $f \in \Pi_M^d$, where $x_k$ are called nodes and $\l_k$ are called weights. It is known that the number of 
nodes, $N$, satisfies 
$$
  N \ge \dim \Pi_{m-1}^d = \binom{m+d-1}{m}, \qquad \hbox{$M = 2 m-1$ or $2m-2$}. 
$$
A cubature rule of degree $M= 2m-1$ with $N$ attaining the above lower bound is called a Gaussian cubature rule. 
For $d =1$, a Gaussian quadrature rule of degree $2m-1$ always exists and its nodes are zeros of orthogonal 
polynomials of degree $m$ with respect to $d\mu$. For $d > 1$, it is known that a Gaussian cubature rule of degree
$2m-1$ exists if and only if the corresponding orthogonal polynomials of degree $m$ have $\binom{m+d-1}{m}$ 
real, distinct common zeros (\cite{DX,My,St}). As a consequence, Gaussian cubature rules rarely exist. In fact,
at the moment, only two families of integrals are known for which Gaussican cubature rules of all orders exist
(\cite{BSX,LX}); one of them is generated by the Chebyshev polynomials of the second kind that are related to 
the generalized characteristic polynomials. 

The purpose of this paper is to explore possible connection between characteristic polynomials and orthogonal 
polynomials, especially the connection that will lead to new Gaussian cubature rules. In order to establish that a 
family of generalized 
characteristic polynomials are orthogonal, we shall show that the polynomials satisfy a three--term relation which
implies, by Favard's theorem, orthogonality provided that the coefficients of the three--term relation satisfy certain 
conditions. Unlike the case of one variable, the three--term relation in several variables is given in vector form and 
its coefficients are matrices, which are more difficult for explicit computation. For this reason, we will primarily be 
working with polynomials of two variables. Our main result shows that there are a one--parameter perturbations of 
the Chebyshev polynomials of the second kind that are generalized characteristic polynomials, whose common
zeros are all real, distinct, and generate Gaussian cubature rules. 

In the next section, we provide background properties of orthogonal polynomials in several variables. Since  
we consider integrals in real variables for cubature rules and generalized characteristic polynomials are in 
complex variables, we need to work with polynomials in conjugate complex variables, which requires us to 
restate several results on orthogonal polynomials accordingly. In Section 3, we discuss generalized characteristic
polynomials and two conjectures in \cite{AS} in greater detail, and we will explore the impact of the restriction 
to conjugate complex variables for these polynomials. The new orthogonal polynomials and Gaussian cubature 
rules are studied in Section 4.

\section{Orthogonal polynomials and their common zeros}
\setcounter{equation}{0}

Let $\Pi_n^d$ denote the space of real--valued polynomials of degree at most $n$ in $d$ variables as before. Let $d\mu$
be a real--valued measure defined on a domain $\Omega \in \RR^d$. We usually consider orthogonal polynomials
with respect to the inner product
$$
       \la f, g\ra_\mu: = \int_{\Omega} f(x) g(x) d\mu(x). 
$$
In some cases, we need to define orthogonal polynomials in terms of a linear functional, which is somewhat more 
general. Let $\CL$ be a linear functional that has all finite moments. A polynomial $P \in \Pi_n^d$ is called an 
orthogonal polynomial with respect to $\CL$ if $\CL (P Q) =0$ for all polynomials $Q \in \Pi_{n-1}^d$. When $\CL$ 
is defined by $\CL f = \int_\Omega f(x) d\mu$, then this is the same as orthogonality with respect to $\la f, g\ra_\mu$. 
We will need the notion that $\CL$ is positive definite and quasi--definite, see \cite[Chapt. 3]{DX} 
for definition. For our purpose, the quasi definiteness allows us to use the 
Gram--Schmidt process to generate a complete sequence of orthogonal polynomials $\{P_\a: |\a| =n, \a \in \NN_0^d, 
n=0,1,2\ldots \}$, and the positive definiteness allows us to further normalize the basis to be an orthonormal one, that 
is, $\CL (P_\a^n P_\b^m )  = \delta_{\a,\b} \delta_{m,n}$ for all $\a,\b \in \NN_0^d$ and $m, n \in \NN_0$. For $d =1$,
the positive definiteness of $\CL$ means that $\CL  f = \int f d \mu$ for a nonnegative Borel measure. For $d >1$,
however, further restriction on $\CL$ is needed for this to hold.  

Assume that orthogonal polynomials with respect to a linear functional $\CL$ exist. For $n =0,1,\ldots,$ let $\CV_n^d$ 
be the space of orthogonal polynomials of degree exactly $n$. It is known that $\dim \CV_n^d = \binom{n+d-1}{n}$
and a basis of $\CV_n^d$ can be conveniently indexed by the multi--indices in $\{\a \in \NN_0^d: |\a| =n\}$ with 
respect to a fixed order, say the lexicographical order. Let $\PP_n: =\{P_\a^n: |\a| =n\}$ be a basis of $\CV_n^d$, where
$n$ denotes the total degree of the polynomials. With 
respect to the fixed order, we can regard $\PP_n$ as a column vector. The orthogonal polynomials satisfy a three--term 
relation, which takes the form of 
\begin{equation} \label{eq:real3-term}
  x_i \PP_n(x) = A_{n,i} \PP_{n+1}(x) + B_{n,i} \PP_n(x) + C_{n,i} \PP_{n-1}(x), \quad 1\le i \le d, 
\end{equation}
in the vector notation, where the coefficients $A_{n,i}$, $B_{n,i}$ and $C_{n,i}$ are real matrices of appropriate dimensions. 
These relations and a full rank assumption on $A_{n,i}$ in fact characterize the orthogonality. Furthermore, the polynomials 
in $\PP_n$ have $\dim \Pi_{n-1}^d$ common zeros if and only if 
\begin{equation} \label{eq:real-zeros}
  A_{n-1,i} A_{n-1,j}^\tr = A_{n-1,j} A_{n-1,i}^\tr, \qquad 1 \le i, j \le d.
\end{equation} 
For further results in this direction, see \cite{DX}.

As we mentioned in the introduction, we need to consider orthogonal polynomials in complex conjugate variables.
Let $\Pi_n^d(\CC)$ denote the space of polynomials of total degree $n$ in conjugated complex variables 
$z_1,\ldots, z_d$, where $z_j =  \overline{z_{d-j}}$, with complex coefficients. If $d$ is odd, this requires 
$z_{\f{d+1}{2}} \in \RR$. The relation between the real and the complex variables are 
$$
  x_k = \frac12 (z_j + z_{d-j}), \quad  x_{d+1-k} = \frac1{2i} (z_j - z_{d-j}) \quad 1 \le k \le \lfloor \tfrac{d}{2} \rfloor,
$$
and if $d$ is odd, then $x_{\f{d+1}{2}} = z_{\f{d+1}{2}}$. Let $d\mu$ be a real--valued measure defined on a domain
$\Omega \in \RR^d$. We define the orthogonality in terms of the inner product 
$$
   \la f, g \ra_\mu^\CC: = \int_{\Omega} f(z_1, z_2,\ldots, \bar z_2, \bar z_1) \overline {g(z_1, z_2,\ldots, \bar z_2, \bar z_1)} d \mu
$$
or its linear functional analogue.
\iffalse
If $d$ is even, then for any $\a \in \NN_0^d$, there is a 
unique $\alpha_*$ such that $\overline{z^\a} = z^{\a_*}$. Since it is clear that  ${\a_*}_* = \a$, this leads to a 
unique decomposition $\{\a: |\a| = n\} = \Lambda_n \cup \Lambda_n^*$ with $|\Lambda_n| = |\Lambda_n^*|$. 
If $d$ is odd, then $a_*$ is uniquely defined if $\a \ne  k e_{\f{d+1}2}$, where $e_i$ is the $i$-th coordinate
vector, so that there is a unique decomposition $\{\a: |\a| = n\} = \{n e_{\f{d+1}2}\} \cup \Lambda_n \cup \Lambda_n^*$ 
with $|\Lambda_n| = |\Lambda_n^*|$. The orthogonal polynomials of complex variables, dented by 
$P_\a^n(\CC)$, are related to the real orthogonal polynomials by
$$
  P_\a^n(z,\CC) =  P_\a^n (x) + i P_{\a_*}^n (x) \quad \hbox{and}\quad 
  P_{\a_*}^n(z,\CC) =  P_\a^n (x) - i P_{\a_*}^n (x),  \quad \a \in \Lambda_n,
$$
and, if $d$ is odd and $\a = n e_{\f{d+1}2}$, then $P_\a^n(z, \CC) = P_{\a}^n(x)$. 
\fi
Let $\CV_n^d(\CC)$ denote the space of orthogonal polynomials of degree $n$ in $\Pi_n^d(\CC)$ with respect to
$\la \cdot,\cdot\ra_\mu^\CC$. Since $d\mu$ is real--valued, it can be shown that $\CV_n^d$ and $\CV_n^d(\CC)$ 
coincide. We can establish a precise correspondence between a basis in $\CV_n^d$ and a basis 
$\{P_\a^\CC: |\a| =n\}$ of $\CV_n^d(\CC)$ that satisfies the relation
\begin{equation} \label{eq:J-relationOP}
   P_\a (z_0,z_1, \ldots, \bar z_1, \bar z_0) = \overline {P_\a( \bar{z}_0, \bar z_1 \ldots, z_1, z_0)}.
\end{equation}
For orthogonal polynomials in conjugate complex variables, three--term relation and various properties that rely
on the three--term relation take different forms. To state these results, we shall restrict to two variables, since this 
avoids complicated notations and our examples are given mostly in two variables. 

For $d =2$, a basis of 
$\CV_n^2$ contains $n+1$ elements, which can be conveniently denoted by $P_{k,n}(x,y)$, $0 \le k \le n$, whereas
a basis for $\CV_n^2(\CC)$ can be written as $P_{k,n}^\CC(z,\bar z)$. Let $\PP_n^\CC := \{P_{0,n}^\CC, \ldots, 
P_{n,n}^\CC\}$ and we regard it as a vector. The space $\CV_n^2(\CC)$ has many different bases, among which
we can choose one that satisfies the relation (\cite{X13})
\begin{equation} \label{eq:PPconjugate}
  \overline {\PP_n^\CC(z, \bar z)} = J_{n+1} \PP_n^\CC(z, \bar z), \qquad J_n: = \left[ \begin{matrix} 
         \bigcirc  &  & 1 \\   &  \bddots &  \\   1 &  & \bigcirc 
            \end{matrix} \right],  
\end{equation}
where $J_n$ is of size $n\times n$, which is \eqref{eq:J-relationOP} for two complex variables. 
Furthermore, setting $z = x+iy$ and $\bar z = x - iy$, this basis and a basis for $\CV_n^2$ are related by 
\begin{align} \label{eq:PvsQ}
\begin{split}
  P_{k,n}(x,y) & := \frac{1}{\sqrt{2}} \left[P^\CC_{k,n} (z, \bar z) + P^\CC_{n-k,n} (z, \bar z) \right], \quad 0 \le k \le \f n 2, \\
  P_{k,n}(x,y) & := \frac{1}{\sqrt{2} i } \left[P^\CC_{k,n} (z, \bar z) - P^\CC_{n-k,n} (z, \bar z) \right], \quad \f n 2 < k \le n. 
\end{split}
\end{align}
In the following we normalize our linear functionals so that $\CL 1 =1$. We also define $\PP_{0}^\CC(z,\bar z ) =1$ and 
$\PP^\CC_{-1}(z,\bar z) =0$. 

In order to state the three--term relation for orthogonal polynomials in conjugate complex variables, we need one more  
definition. Let $\CM^\CC(n,m)$ denote the set of complex matrices of size $n \times m$. For a matrix 
$M \in \CM^\CC(n, m)$, we define a matrix $M^\vee$ of the same dimensions by
$$ 
   M^\vee: = J_n \overline{M} J_m.
$$
The following is the three--term relation and the Favard's theorem for polynomials in conjugate complex variables. 

\begin{thm} \label{thm:FavardC}
Let $\{\PP^\CC_n\}_{n=0}^\infty = \{P^\CC_{k,n}: 0 \le k \le n, n\in \NN_0\}$, $\PP_0^\CC(z,\bar z) =1$, be an 
arbitrary sequence in $\Pi^2(\CC)$. 
Then the following statements are equivalent.
\begin{enumerate}[\quad \rm (1)]
\item  There exists a quasi--definite linear functional $\CL$ on $\Pi^2(\CC)$ which 
makes $\{\PP^\CC_n\}_{n=0}^\infty$ an orthogonal basis in $\Pi^d(\CC)$.
\item For $n \ge 0$, there exist matrices $\a_{n}: (n+1)\times (n+2)$,
$\b_{n}: (n+1)\times (n+1)$ and $\g_{n-1}: (n+1)\times n$ such that 
\begin{align}\label{3-termC}
  z \PP_n^\CC (z,\bar z)& = \a_n \PP^\CC_{n+1} (z,\bar z)+ \b_n \PP^\CC_n (z,\bar z) + \g_{n-1} \PP^\CC_{n-1} (z,\bar z),
\end{align}
and the matrices in the relation satisfy the rank condition 
\begin{align*}
   \rank (\a_n + \a_n^\vee) & = \rank (\a_n - \a_n^\vee) = n+1 \quad \hbox{and} \quad \rank \left [ \begin{matrix} \a_n \\ \a_{n}^\vee \end{matrix} \right]=n+2 \\
  \rank (\g_{n-1} + \g_{n-1}^\vee)  & = \rank (\g_{n-1} - \g_{n-1}^\vee) = n \quad \hbox{and} 
    \quad \rank \left [ \begin{matrix} \g_{n-1} \\ \g_{n-1}^\vee \end{matrix} \right]=n+1.
\end{align*}
\end{enumerate}
\end{thm}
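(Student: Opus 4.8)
The plan is to deduce the theorem from the classical three--term relation and Favard theorem for orthogonal polynomials of two \emph{real} variables (\cite{DX}), transported through the dictionary recorded above: the (unitary) change of basis \eqref{eq:PvsQ} between $\PP_n^\CC$ and $\PP_n$, the conjugation symmetry \eqref{eq:PPconjugate}, and the substitution $x=\tfrac12(z+\bar z)$, $y=\tfrac1{2i}(z-\bar z)$. The operation $M\mapsto M^\vee$ is built precisely so that these two pictures are compatible, and establishing that compatibility carefully is the crux.

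For $(1)\Rightarrow(2)$, assume $\CL$ is quasi--definite and $\{\PP_n^\CC\}$ is orthogonal. Since $z$ and $\bar z$ are mutually adjoint for $\la\cdot,\cdot\ra_\mu^\CC$ (or its functional analogue) and $\bar z\,\PP_m^\CC$ has degree $m+1$, we get $\la z\,\PP_n^\CC,\PP_m^\CC\ra_\mu^\CC=0$ whenever $m\le n-2$; expanding $z\,\PP_n^\CC\in\Pi_{n+1}^2(\CC)$ in the orthogonal basis then gives \eqref{3-termC} with uniquely determined $\a_n,\b_n,\g_{n-1}$. Conjugating \eqref{3-termC}, using \eqref{eq:PPconjugate} and that $J_n$ squares to the identity, produces the companion relation
\[
  \bar z\,\PP_n^\CC(z,\bar z)=\a_n^\vee\,\PP_{n+1}^\CC(z,\bar z)+\b_n^\vee\,\PP_n^\CC(z,\bar z)+\g_{n-1}^\vee\,\PP_{n-1}^\CC(z,\bar z).
\]
Forming $\tfrac12$ times the sum and $\tfrac1{2i}$ times the difference of the two relations yields the three--term relations for multiplication by $x$ and by $y$ in the basis $\PP_n^\CC$, and applying the change of basis \eqref{eq:PvsQ} rewrites them as the real relation \eqref{eq:real3-term} for a basis $\PP_n$ of $\CV_n^2$, whose leading coefficients are, up to that unitary change of basis, $\tfrac12(\a_n+\a_n^\vee)$ and $\tfrac1{2i}(\a_n-\a_n^\vee)$, and similarly for $\g_{n-1}$. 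The classical theory forces these real matrices to have full row rank and to be jointly of rank $n+2$ when stacked. Since $\bigl[\begin{smallmatrix}\a_n+\a_n^\vee\\ \a_n-\a_n^\vee\end{smallmatrix}\bigr]$ and $\bigl[\begin{smallmatrix}\a_n\\ \a_n^\vee\end{smallmatrix}\bigr]$ differ by an invertible matrix, these are precisely the stated conditions on $\a_n$, and the argument for $\g_{n-1}$ is the same.

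For $(2)\Rightarrow(1)$, run the dictionary backwards. From \eqref{3-termC} derive the $\bar z$ companion relation as above (only \eqref{eq:PPconjugate} is used), set $\PP_n$ as in \eqref{eq:PvsQ}, and read off the real three--term relation \eqref{eq:real3-term} with coefficients assembled from $\a_n,\b_n,\g_{n-1}$ and their $\vee$--transforms. The stated rank conditions translate into exactly the full--rank hypotheses required by the several--variable Favard theorem (\cite{DX}), which yields a quasi--definite linear functional $\wt\CL$ on $\Pi^2$ making $\{\PP_n\}$ orthogonal. Transporting $\wt\CL$ to $\Pi^2(\CC)$ through $z=x+iy$ defines $\CL$; because $\{\PP_n^\CC\}$ satisfies \eqref{eq:PPconjugate}, $\CL$ is well defined and quasi--definite on $\Pi^2(\CC)$, and orthogonality of $\{\PP_n\}$ for $\wt\CL$ is equivalent via \eqref{eq:PvsQ} to orthogonality of $\{\PP_n^\CC\}$ for $\CL$.

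I expect the main obstacle to be the two matching problems just indicated: checking that the three complex rank conditions on $\a_n$ (and on $\g_{n-1}$) correspond, after passing through \eqref{eq:PvsQ} and the $x,y$ splitting, to the precise hypotheses of the real Favard theorem — in particular that nothing is lost in the quasi--definite rather than positive--definite generality — and verifying that the functional produced in $(2)\Rightarrow(1)$ is quasi--definite on all of $\Pi^2(\CC)$, not merely on the image of the real algebra. This last point is exactly where the conjugation symmetry \eqref{eq:PPconjugate}/\eqref{eq:J-relationOP} is indispensable.
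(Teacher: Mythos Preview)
Your proposal is correct and follows essentially the same approach the paper indicates: the paper does not give a detailed proof but states that the result ``can be deduced from \eqref{eq:PvsQ} and the corresponding results in real variables'' (citing \cite{X13}), which is precisely the dictionary-and-transport argument you outline. Your write-up is in fact more detailed than what the paper provides, and your identification of the $\bar z$ companion relation via \eqref{eq:PPconjugate} is exactly the paper's \eqref{3-termC2}.
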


If $\PP_n^\CC$ is orthogonal, then the matrices $\a_n$ and $\g_{n-1}$ are related by
\begin{equation}\label{gamma-alpha}
    \g_{n-1} H_{n-1} = J_{n+1} (\a_{n-1} H_n)^\tr J_n
\end{equation}
where $H_n = \CL \big(\PP^\CC_n ( \PP^\CC_n)^* \big )$. Using the fact that $J \overline{ \PP_n^\CC} = \PP_n^\CC$, 
it then follows from \eqref{3-termC} that we also have 
\begin{align}\label{3-termC2}
  \bar z \PP_n^\CC (z,\bar z)& = \a_n^\vee \PP^\CC_{n+1} (z,\bar z)+ \b_n^\vee \PP^\CC_n (z,\bar z) + \g_{n-1}^\vee \PP^\CC_{n-1} (z,\bar z).
\end{align}
One can compare \eqref{3-termC} and \eqref{3-termC2} with \eqref{eq:real3-term}. These results were formulated
recently in \cite{X13}. They can be deduced from \eqref{eq:PvsQ} and the corresponding results in real variables, so 
are the results stated below. 

\begin{thm} \label{cor:ONP3term}
The equivalence of Theorem \ref{thm:FavardC} remains true if $\CL$ is positive definite and $\PP_n^\CC$ 
are orthonormal in (1)  and assume, in addition, that $\g_{n-1} =  (\a_{n-1}^*)^\vee$ in (2). Furthermore, in
this case, there is a real--valued positive measure $d\mu$ with compact support in $\RR^2$ such that 
$\CL f = \int f d \mu$ if 
\begin{align}\label{eq:norm-cpt}
   \max_{n \ge 0} \|\a_n\| < \infty, \quad  \hbox{and}\quad  \max_{n \ge 0} \|\b_n\| < \infty, \qquad n =0,1,2,\ldots,
\end{align}
where $\|\cdot\|$ is a fixed matrix norm. 
\end{thm}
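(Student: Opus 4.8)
The plan is the one indicated before the theorem: transport the whole setup to real variables through \eqref{eq:PvsQ}, apply the classical orthonormal Favard theorem and the operator--theoretic criterion for a compactly supported representing measure, and transport the conclusion back. Write $z = x+iy$ and let $T_n$ be the $(n+1)\times(n+1)$ matrix realizing \eqref{eq:PvsQ}, so that $\PP_n(x,y) = T_n\,\PP_n^\CC(z,\bar z)$ is a basis of $\CV_n^2$; then $T_n$ is unitary, and \eqref{eq:PPconjugate} gives $\overline{T_n} = T_n J_{n+1}$, whence $T_n \a_n^\vee T_{n+1}^{-1} = \overline{\,T_n \a_n T_{n+1}^{-1}\,}$ and likewise with $\a_n$ replaced by $\b_n$ or $\g_{n-1}$. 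Adding and subtracting \eqref{3-termC} and \eqref{3-termC2} and using $x = \tfrac12(z+\bar z)$, $y = \tfrac1{2i}(z-\bar z)$ together with $\PP_n = T_n\PP_n^\CC$ turns \eqref{3-termC}--\eqref{3-termC2} into the real three--term relation \eqref{eq:real3-term} for $\{\PP_n\}$, with
\begin{equation*}
  A_{n,1} = \tfrac12\,T_n(\a_n+\a_n^\vee)\,T_{n+1}^{-1}, \qquad A_{n,2} = \tfrac1{2i}\,T_n(\a_n-\a_n^\vee)\,T_{n+1}^{-1},
\end{equation*}
and with $B_{n,i}$, $C_{n,i}$ obtained from $\b_n,\b_n^\vee$ and $\g_{n-1},\g_{n-1}^\vee$ by the same recipe. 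The identities above together with $(M^\vee)^\vee = M$ show $A_{n,i},B_{n,i},C_{n,i}$ are real; since $T_n$ is unitary and $J$ is a permutation, this correspondence preserves rank and satisfies $\|A_{n,i}\|\le\|\a_n\|$, $\|B_{n,i}\|\le\|\b_n\|$. In particular the rank conditions of Theorem~\ref{thm:FavardC} become exactly: $A_{n,1}$ and $A_{n,2}$ have full row rank $n+1$, and the matrix obtained by stacking $A_{n,1}$ over $A_{n,2}$ has full column rank $n+2$ --- the rank hypothesis of the real orthonormal Favard theorem.

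For the first assertion, unitarity of $T_n$ makes $\{\PP_n^\CC\}$ orthonormal with respect to a positive--definite $\CL$ on $\Pi^2(\CC)$ if and only if $\{\PP_n\}$ is orthonormal with respect to the corresponding real functional. The added hypothesis $\g_{n-1}=(\a_{n-1}^*)^\vee$ is precisely \eqref{gamma-alpha} with $H_n=I$, and under the conjugation above it becomes $C_{n,i}=A_{n-1,i}^\tr$, the relation distinguishing the orthonormal from the merely quasi--definite three--term recurrence in real variables; together with the rank conditions found above and the symmetry of $B_{n,i}$, the data of (2) is the conjugate--variable form of the hypotheses of the real orthonormal Favard theorem (\cite{DX}). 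Both implications then follow: from matrices $A_{n,i},B_{n,i}$ of the stated form that theorem produces a positive--definite functional for which $\{\PP_n\}$ --- and therefore $\{\PP_n^\CC\}$, since $\PP_n^\CC = T_n^{-1}\PP_n$ --- is orthonormal; conversely, given orthonormal $\{\PP_n^\CC\}$, Theorem~\ref{thm:FavardC} and \eqref{gamma-alpha} supply the relations, the rank conditions and $\g_{n-1}=(\a_{n-1}^*)^\vee$, and one recovers $\a_n,\b_n,\g_{n-1}$ by inverting the invertible maps above.

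For the second assertion, assume in addition \eqref{eq:norm-cpt}. Complete the real polynomial space in the inner product $\la f,g\ra := \CL(fg)$ to a Hilbert space $\CH$; in the orthonormal basis $\{\PP_n\}$ the operators $f\mapsto xf$ and $f\mapsto yf$ are represented by block--tridiagonal matrices $L_1,L_2$ whose $n$-th block row is $(C_{n,i},B_{n,i},A_{n,i})$. These are symmetric, by $C_{n,i}=A_{n-1,i}^\tr$ and the symmetry of $B_{n,i}$, and bounded, since \eqref{eq:norm-cpt} and the bounds $\|A_{n,i}\|\le\|\a_n\|$, $\|B_{n,i}\|\le\|\b_n\|$ above give uniformly bounded blocks; hence they extend to bounded self--adjoint operators on $\CH$, and they commute because multiplication by $x$ and by $y$ commute on polynomials. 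The joint spectral theorem yields a projection--valued measure $E$ supported on the compact set $\sigma(L_1)\times\sigma(L_2)\subset\RR^2$; setting $d\mu := \la E(\cdot)\PP_0,\PP_0\ra$, a positive measure of compact support, one gets $\CL f = \la f(L_1,L_2)\PP_0,\PP_0\ra = \int f\,d\mu$ for all polynomials, which --- since $\CL$ and the real functional are identified by $z = x+iy$ --- is the statement for $\CL$ on $\Pi^2(\CC)$. This is the standard construction; see \cite{DX}.

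The routine parts are the identity $\overline{T_n}=T_nJ_{n+1}$, the reality of $A_{n,i},B_{n,i},C_{n,i}$, and the algebra passing from \eqref{3-termC}--\eqref{3-termC2} to \eqref{eq:real3-term}. The steps I expect to require genuine care are making the dictionary precise --- in particular checking that ``$\rank(\a_n\pm\a_n^\vee)=n+1$ and $\a_n$ stacked over $\a_n^\vee$ has rank $n+2$'' is the faithful conjugate--variable shadow of the real full--rank condition, and that the $\b$-side of the data yields symmetric $B_{n,i}$ --- and the operator--theoretic construction of $\mu$, where \eqref{eq:norm-cpt} enters precisely to make $L_1,L_2$ bounded.
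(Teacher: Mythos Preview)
Your proposal is correct and follows exactly the route the paper indicates: the paper does not give a detailed proof but states that the result ``can be deduced from \eqref{eq:PvsQ} and the corresponding results in real variables'' (citing \cite{X13}), and you have carried out precisely that deduction, including the standard operator--theoretic construction of the compactly supported measure. One small caveat worth tightening: for even $n$ the middle row of $T_n$ as literally written in \eqref{eq:PvsQ} has norm $\sqrt{2}$ rather than $1$, so $T_n$ is unitary only after a trivial renormalization at that index --- this does not affect any of your rank, reality, or boundedness conclusions.
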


The common zeros of $\PP_n^\CC$ can be characterized in terms of the coefficient matrices of the three--term relation.
A common zero of $\PP_n^\CC$ is called simple if at least one partial derivative of $\PP_n^\CC$ is not identically zero. 

\begin{thm} \label{thm:zeros}
Assume $\PP_n^\CC$ consists of an orthogonal basis of $\CV_n^2(\CC)$. Then 
\begin{enumerate} [\quad \rm (1)]
\item $\PP^\CC_n$ has $\dim \Pi_{n-1}^2$ common zeros if and only if 
\begin{equation}\label{max-zero-cond}
    \a_{n-1} \g_{n-1}^\vee = \a_{n-1}^\vee \g_{n-1}. 
\end{equation}
\item If, in addition, $\g_{n-1} =  (\a_{n-1}^*)^\vee$, then all zeros of $\PP_n^\CC$ are real.
\item All zeros of $\PP_n^\CC$ are simple. 
\end{enumerate}
\end{thm}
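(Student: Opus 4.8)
The plan is to reduce all three statements to the known facts about common zeros of orthogonal polynomials in the real variables $x=\tfrac12(z+\bar z)$, $y=\tfrac1{2i}(z-\bar z)$, transported through the dictionary \eqref{eq:PvsQ}.

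First I would make the change of basis explicit. By \eqref{eq:PvsQ} there is an invertible $(n+1)\times(n+1)$ matrix $E_{n+1}$, built from $1/\sqrt2$ and $\pm 1/(\sqrt 2\, i)$, with $\PP_n=E_{n+1}\PP_n^\CC$ after the substitution $z=x+iy$, $\bar z=x-iy$; here $\PP_n=\{P_{k,n}\}$ is the associated real basis of $\CV_n^2$. From these formulas and \eqref{eq:PPconjugate} one checks directly the identities $\overline{E_{n+1}}=E_{n+1}J_{n+1}$ and $E_{n+1}^\tr E_{n+1}=J_{n+1}$, so that $E_{n+1}$ is unitary. Two consequences will be used throughout. (i) The substitution above is a $\CC$-linear isomorphism of $\CC^2$ in the coordinates $(z,\bar z)$ onto $\CC^2$ in the coordinates $(x,y)$ carrying the real slice $\{\bar z=\overline z\}$ onto $\RR^2$; composed with the invertible linear recombination $E_{n+1}^{-1}$ of the polynomial vectors, it puts the common zeros of $\PP_n^\CC$, counted with multiplicity, into bijection with those of $\PP_n$, matching real zeros with real zeros and simple zeros with simple zeros. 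Hence it suffices to match the hypothesis of each part with its real counterpart. (ii) Substituting $\PP_m=E_{m+1}\PP_m^\CC$ into the real three-term relation \eqref{eq:real3-term} for $x_1=x$, $x_2=y$ and comparing with \eqref{3-termC}, \eqref{3-termC2} (the coefficients being uniquely determined since the $\PP_m^\CC$ form a basis) yields the dictionary
\begin{align*}
  A_{n,1}+iA_{n,2}=E_{n+1}\,\a_n\, E_{n+2}^{-1}, &\qquad A_{n,1}-iA_{n,2}=E_{n+1}\,\a_n^\vee\, E_{n+2}^{-1},\\
  C_{n,1}+iC_{n,2}=E_{n+1}\,\g_{n-1}\, E_{n}^{-1}, &\qquad C_{n,1}-iC_{n,2}=E_{n+1}\,\g_{n-1}^\vee\, E_{n}^{-1}.
\end{align*}

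For part (1) I would start from the real-variable criterion that $\PP_n$ has $\dim\Pi_{n-1}^2$ common zeros if and only if $A_{n-1,1}C_{n,2}=A_{n-1,2}C_{n,1}$ --- the quasi--definite form of \eqref{eq:real-zeros}, to which it reduces when $C_{n,i}=A_{n-1,i}^\tr$ (see \cite{DX}). Inserting the dictionary from (ii) and using the cancellation $E_{n+1}^{-1}E_{n+1}=I$, both sides collapse to $\tfrac1{4i}E_n(\a_{n-1}\pm\a_{n-1}^\vee)(\g_{n-1}\mp\g_{n-1}^\vee)E_n^{-1}$, and equating and expanding leaves exactly $\a_{n-1}\g_{n-1}^\vee=\a_{n-1}^\vee\g_{n-1}$, which is \eqref{max-zero-cond}. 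For part (2), the same dictionary together with $\overline{E_m}=E_mJ_m$ and $E_m^\tr E_m=J_m$ shows that $\g_{n-1}=(\a_{n-1}^*)^\vee$ is equivalent to $C_{n,i}=A_{n-1,i}^\tr$ for $i=1,2$; thus $\PP_n$ is an orthonormal family in the real variables, so every common zero $\xi$ of $\PP_n$ is a joint eigenvalue, with nonzero eigenvector $(\PP_0(\xi),\dots,\PP_{n-1}(\xi))$, of the real symmetric truncated block Jacobi matrices, hence real; by (i), all zeros of $\PP_n^\CC$ are then real. Part (3) is immediate from (i) and the classical fact that every common zero of an orthogonal polynomial system in real variables is simple (see \cite{DX}).

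The step I expect to be the main obstacle is part (1): keeping the matrix sizes ($n\times(n+1)$ versus $(n+1)\times n$), the involution $M\mapsto M^\vee=J\overline M J$, the ordinary transpose, and the above identities for the $E$-matrices mutually consistent throughout the computation --- and, relatedly, using the correct quasi--definite form of the real common--zeros criterion so that \eqref{max-zero-cond}, which genuinely involves $\g_{n-1}$ rather than only $\a_{n-1}$, emerges. Everything after that is a routine transcription of the real--variable theory summarized in Section 2.
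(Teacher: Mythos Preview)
Your approach is correct and matches what the paper does: the paper gives no detailed proof of this theorem but simply remarks that the orthonormal case is in \cite{X13} and that ``the above statement (without orthonormality) can be deduced from the results for orthogonal polynomials of real variables,'' which is precisely the reduction via \eqref{eq:PvsQ} that you carry out. Your dictionary between $(\a_{n-1},\g_{n-1})$ and $(A_{n-1,i},C_{n,i})$ and the ensuing translation of the real quasi--definite criterion into \eqref{max-zero-cond} is exactly the intended mechanism.
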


For orthonormal polynomials, this result is stated in \cite{X13}; the above statement (without orthonormality) can
be deduced form the results for orthogonal polynomials of real variables. 
\iffalse
The zeros of $\PP_n^\CC$ can be 
characterized as the  joint eigenvalues of two block Jacobi matrices, for $z$ and $\bar z$ respectively, formed 
by the coefficients of the three--term relations, and the condition \eqref{max-zero-cond} ensures that the block 
Jacobi matrices commute, which implies the existence of maximal number of zeros.
\fi
We observe that, by Corollary \ref{cor:ONP3term}, $\g_{n-1} =  (\a_{n-1}^*)^\vee$ holds if $\CL$ is positive definite, 
which holds if $H_n$ is positive definite for all $n \ge 0$. The condition \eqref{max-zero-cond} 
is equivalent to \eqref{eq:real-zeros} and it does not usually hold. 

Although the space of orthogonal polynomials of real variables and that of conjugate complex variables are the same,
sometimes it is more convenient to work with conjugate complex variables. This is illustrated by the example below.

\medskip\noindent
{\bf Example 2.4.} \label{exam:cheby}
{\it Chebyshev polynomials on the deltoid}. These polynomials are orthogonal with respect to
\begin{equation}\label{eq:cheby-weight}
  w_\a(z): = \left [-3(x^2+y^2 + 1)^2 + 8 (x^3 - 3 xy^2)  +4\right]^{\a}, \quad \a = \pm \frac12,
\end{equation}
on the deltoid, which is a region bounded by the Steiner's hypocycloid, or the curve
$$
  x + i y = (2 e^{ i \t} + e^{- 2 i \t})/3, \qquad  0 \le \t \le 2 \pi.
$$ 
The three--cusped region is depicted in Figure \ref{figure:region}. 
\begin{figure}[ht]
\begin{center} 
 \includegraphics[scale=0.4]{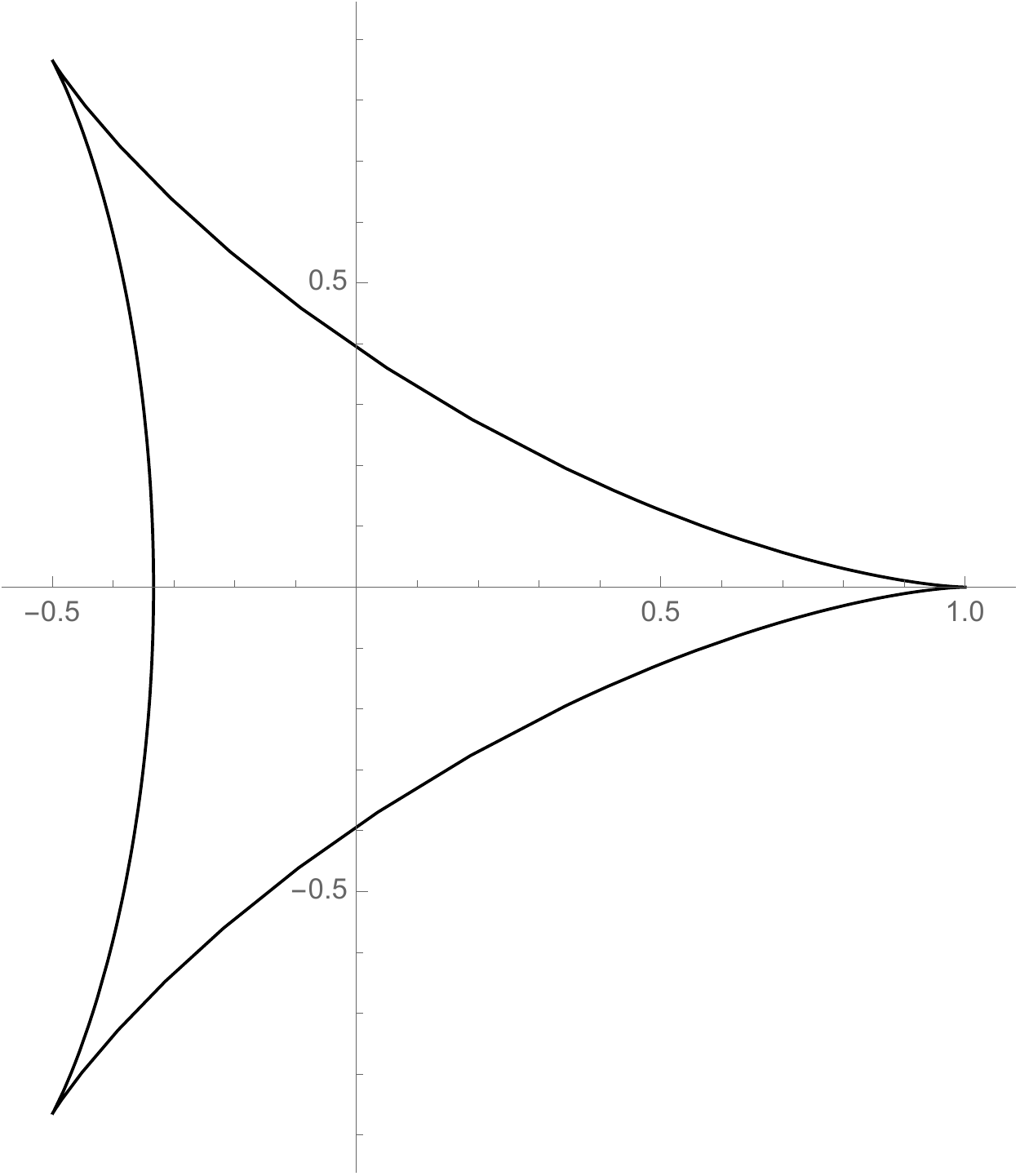} 
\caption{Region bounded by Steiner's hypocycloid}
\label{figure:region} 
\end{center} 
\end{figure} 
These polynomials are first studied by Koornwinder in \cite{K74} and they are related to the symmetric and
antisymmetric sums of exponentials on a regular hexagonal domain \cite{LSX}. Let  $U_k^n \in  \Pi_n^2(\CC)$ be 
the Chebyshev polynomials of the second kind defined by the three--term recursive relations
\begin{align} \label{recurT}
     U _k^{n+1} (z,\bar z) = 3 z U_{k}^n(z,\bar z) -
                 U_{k+1}^n(z,\bar z) - U_{k-1}^{n-1}(z,\bar z) 
\end{align}
for $ 0 \le k \le n$ and $n\ge 1$, where $U_{-1}^n(z,\bar z ) : =  0$ and $ U_n^{n-1}(z,\bar z ): =0$,
and the initial conditions 
\begin{align*}
 U_0^0(z,\bar z)=1, \quad U_0^1(z,\bar z)= 3z,  \quad U_1^1(z,\bar z)= 3 \bar z.
\end{align*}
Then $U_k^n(z,\bar z)$, $0 \le k \le n$, are mutually orthogonal with respect to $w_{\frac12}$. 
\qed

\medskip

It is known (\cite{LSX}) that the polynomials $U_k^n$, $0 \le k \le n$, possess $\dim \Pi_{n-1}^2$
real common zeros and $w_{\f12}$ admits Gaussian cubature rule of degree $2n-1$ for
all $n$.  
\section{Characteristic polynomials and orthogonal polynomials}
\setcounter{equation}{0}

In this section we discuss generalized characteristic polynomials defined in the introduction when
the matrix $A$ is banded Toeplitz. In the case of one variable, it is well known that the characteristic 
polynomial of a tri--diagonal matrix with positive off--diagonal elements is an orthogonal polynomial
with respect to some positive definite linear functional. For generalized characteristic polynomials to 
be orthogonal, we need more restrictive conditions on the matrix $A$, more or less a banded Toeplitz 
matrix. 
 
Recall that, for a matrix $A \in \CM(m,n+1)$ and $I = \{i_1,\ldots,i_m\}$ for 
$1 \le i_1< \ldots < i_m \le m+n$, the generalized characteristic polynomial $P_I(z_0,\ldots,z_n)$ is defined by \eqref{eq:P_I} and it is a polynomials of degree $m$. If $\CA$ is an infinite matrix, we define these 
polynomials for $A$ being the main $m \times (m+n)$ submatrix  in the left and upper corner. 
We need the following definition from \cite{Lee}.

\begin{defn} 
A matrix $A \in \CM(m,n)$ is called centrohermitian if 
$$
     A =   J_m \overline{A }J_n.
$$
\end{defn}

If $A = (a_{i,j})$ is a centrohermitian matrix in $\CM_{m, m+n}$, then $a_{i,j} = \overline{a_{m+1-i, m+n+1-j}}$ for
$1 \le i \le m$ and $1 \le j \le m+n$. If $n= 2 \ell$, then
$$
 A = \left[ \begin{matrix} 
          a_{1,1} & \cdots & a_{1,\ell} & \overline{a_{m,\ell}} & \cdots & \overline{a_{m,1}} \\ 
           \vdots   &  \ddots & \vdots & \vdots & \ddots & \vdots \\ 
           a_{m,1} & \cdots & a_{m,\ell} & \overline{a_{1,\ell}} & \cdots & \overline{a_{1,1}} 
       \end{matrix} \right],
$$
and if $n = 2\ell +1$, then
$$
 A = \left[ \begin{matrix} 
          a_{1,1} & \cdots & a_{1,\ell} & a_{1,\ell+1}& \overline{a_{m,\ell}} & \cdots & \overline{a_{m,1}} \\ 
           \vdots   &  \ddots & \vdots & \vdots & \vdots & \ddots & \vdots \\ 
           a_{m,1} & \cdots & a_{m,\ell} & \overline{a_{1,\ell+1}}&  \overline{a_{1,\ell}} & \cdots & \overline{a_{1,1}} 
       \end{matrix} \right].
$$
The following conjecture was made in \cite{AS}: 
\iffalse
Let $\CA = (c_{i-j})$ with $i,j = 1,2,\ldots$ be an infinite, banded Toeplitz matrix, where $c_i =0$ if $i < -k$ or $i > h$.
Associate to $\CA$, define 
$$
  Q(t,x) = t^k  \left( \sum_{j=-k}^h c_j t^j - \sum_{j=0}^n x_j t^j \right). 
$$
Following \cite{AS}, we call a banded Toeplitz matrix  that satisfies 
\begin{equation}\label{eq:Qtz}
\overline{Q(t, z_0,\ldots, z_n)} =  \overline{t}^{h+k} Q(1/  \overline{t}, \overline{x}_n, \overline{x}_{n-1}, \ldots, \overline{x}_0)
\end{equation}
multihermitian or order $n$. The following conjecture appeared in \cite[Conjecture 10]{AS}.
\begin{prop}
Let $A$ be a multihermitian of order $n$, then $k = h-n$ and, for every pair $m,n \in \NN$, the matrix $A_{m, m+n}$ 
that consists of the first $m$ rows and $m+n$ columns $A$ is centrohermitian. In other words, the first row and column 
of $A$ are, respectively, 
$$
(c_0, c_1,\ldots , \overline{c}_1, \overline{c}_0, \overline{c}_{-1}, \cdots, \overline{c}_{-k}, 0, \ldots,) \quad
 \hbox{and}\quad (c_0, c_{-1},\ldots, c_{-k}, 0,\ldots)^\tr. 
$$ 
\end{prop}

\begin{proof}
Comparing the coefficient of $x_j$ in the both sides of \eqref{eq:Qtz} shows that $k = h-n$. With $k=h-n$, 
comparing the coefficients of $t_j$ we obtain 
$$
   \overline{c}_j = c_{-j+n}, \qquad j = -k , -k+1,\ldots h,
$$
from which it is easy to see that $A$ is of the given form and that $A_{m,m+n}$ is centrohermitian. 
\end{proof}
t turns out that the notion of multihermitian matrix is closely related to the notion of centrohermitian \cite{Lee}. 
\fi

\begin{conj}
If $A \in \CM(m, m+n)$ is Toeplitz and centrohermitian, then each common zero
$(z_0,\ldots,z_n)$ of $\{P_I:|I| = m\}$ satisfies $z_j = \overline{z_{n-j}}$, $0\le j \le n$. 
\end{conj}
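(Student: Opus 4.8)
The plan is to exploit the interplay between the centrohermitian symmetry of $A$ and the structure of the matrices $\CI_s$ under the flip operation $M \mapsto J\overline{M}J$. First I would record how the $\CI_s$ transform: since $\CI_s = (\delta_{s+i-j}) \in \CM(m,m+n)$, conjugating and multiplying by the reversal matrices $J_m$ (on the left) and $J_{m+n}$ (on the right) sends the entry in position $(i,j)$ to the entry in position $(m+1-i, m+n+1-j)$, which is $\delta_{s+(m+1-i)-(m+n+1-j)} = \delta_{(s-n)+j-i}$. Hence $J_m \overline{\CI_s} J_{m+n} = \CI_{n-s}^{\mathsf{T}}$-flavored object; more precisely one checks $J_m \CI_s J_{m+n} = \CI_{n-s}$ after also accounting for the transpose-free reversal, so that the full pencil satisfies
\begin{equation*}
  J_m\, \overline{A(z_0,\ldots,z_n)}\, J_{m+n} = A + \overline{z_0}\,\CI_n + \cdots + \overline{z_n}\,\CI_0 = A(\overline{z_n},\overline{z_{n-1}},\ldots,\overline{z_0}),
\end{equation*}
using that $A$ is centrohermitian. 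This is the key identity: the pencil is equivariant for the simultaneous action of complex conjugation of entries, the coordinate reversal $(z_0,\ldots,z_n)\mapsto(\overline{z_n},\ldots,\overline{z_0})$, and the reversal of row/column indices.

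Next I would translate this equivariance into a statement about the common-zero set $Z = \{(z_0,\ldots,z_n): P_I(z)=0 \text{ for all } |I|=m\}$. Left-multiplication by the fixed invertible matrix $J_m$ does not change which $m$-column submatrices are singular; right-multiplication by $J_{m+n}$ permutes the columns by the order-reversing permutation $j\mapsto m+n+1-j$, hence permutes the index sets $I \mapsto I^* := \{m+n+1-i : i\in I\}$, while complex conjugation of all entries replaces $\det A_I(z)$ by $\overline{\det A_{I}(\overline{z})}$ for the conjugated scalar arguments. Putting these together, $\det A_I$ vanishes at $(z_0,\ldots,z_n)$ if and only if $\det A_{I^*}$ vanishes at $(\overline{z_n},\ldots,\overline{z_0})$. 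Since $I\mapsto I^*$ is a bijection of $\{I:|I|=m\}$ onto itself, this shows the zero set is invariant: $(z_0,\ldots,z_n)\in Z \iff (\overline{z_n},\overline{z_{n-1}},\ldots,\overline{z_0})\in Z$. In other words, the involution $\tau(z_0,\ldots,z_n) = (\overline{z_n},\ldots,\overline{z_0})$ maps $Z$ bijectively to itself.

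The final step is to upgrade ``$\tau$ preserves $Z$'' to ``every point of $Z$ is $\tau$-fixed,'' i.e. $z_j = \overline{z_{n-j}}$ for all $j$. This is where I expect the main obstacle. Invariance alone does not force pointwise fixedness; one needs to use that $Z$ is as small as possible — by Proposition \ref{prop:basic}, $|Z| = \binom{m+n}{n+1}$ counting multiplicities, the maximal number — so there is no ``room'' for $\tau$ to act nontrivially. The cleanest route I would try is algebraic: pass to the quotient. The subring of $\CC[z_0,\ldots,z_n]$ fixed by the $\CC$-antilinear involution $\tau$ is exactly $\RR[w_1,\ldots,w_{n+1}]$ where the $w_k$ are the real and imaginary parts of $z_j+\overline{z_{n-j}}$ and $z_j-\overline{z_{n-j}}$ — essentially the real coordinates in which the Chebyshev picture lives. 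One shows the ideal generated by $\{P_I\}$ is generated by $\tau$-invariant elements (take real/imaginary combinations $P_I \pm \overline{P_{I^*}\circ\tau}$), hence the zero set, being cut out by real polynomials in the $w_k$, together with the count $\binom{m+n}{n+1}$ matching the generic fiber dimension over the $w$-space, forces each fiber of the map $z\mapsto w$ restricted to $Z$ to be a single $\tau$-orbit of size one. Alternatively, and perhaps more robustly, I would invoke Theorem \ref{thm:zeros}: once one knows (from Section 4, or directly) that these characteristic polynomials satisfy the conjugate three-term relation with $\g_{n-1}=(\a_{n-1}^*)^\vee$, part (2) gives that all common zeros are ``real'' in the conjugate-variable sense, which is precisely $z_j=\overline{z_{n-j}}$; the centrohermitian hypothesis on $A$ is exactly what produces the $\vee$-symmetry of the three-term coefficients. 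The honest difficulty is establishing the three-term relation (or the direct dimension count) in this generality rather than just for the deltoid example, so I would present the equivariance of $Z$ as the conceptual heart and flag the passage to pointwise fixed points as the step requiring the maximality of $|Z|$.
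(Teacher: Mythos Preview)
The statement you are attempting to prove is labelled a \emph{conjecture} in the paper and is not proved there; it is recalled from \cite{AS}, and after Proposition~\ref{prop:P_Iconjugate} the paper merely reformulates it as Conjecture~3.4 and moves on. So there is no ``paper's own proof'' to compare against. What the paper \emph{does} prove is exactly your first step: the equivariance identity $J_m\,\overline{A(z_0,\ldots,z_n)}\,J_{m+n} = A(\overline{z_n},\ldots,\overline{z_0})$ and its consequence $\overline{P_I(z)} = P_{\overline I}(\overline{z_n},\ldots,\overline{z_0})$, which is precisely Proposition~\ref{prop:P_Iconjugate}. Your derivation of that identity is correct, and from it the invariance of the zero set $Z$ under $\tau(z_0,\ldots,z_n)=(\overline{z_n},\ldots,\overline{z_0})$ follows just as you say.

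The gap you yourself flag is genuine and is exactly why the statement remains a conjecture. Your first proposed route (``the count $\binom{m+n}{n+1}$ matches the generic fiber dimension, hence each fiber has size one'') does not work: the projection $z\mapsto w$ to the $\tau$-invariant coordinates is generically two-to-one, so a priori $Z$ could project to a set of half the size with each fiber a $\tau$-orbit of two conjugate points; nothing in the Bezout-type count of Proposition~\ref{prop:basic} rules this out. Your second proposed route, invoking Theorem~\ref{thm:zeros}(2), requires the three-term coefficients to satisfy $\g_{n-1}=(\a_{n-1}^*)^\vee$, i.e.\ positive definiteness of the moment functional. The paper establishes this only for the special pencil $A_{m,m+1}^{a,c}$ of Section~4, and even there only under the restriction $|c|\ge 2|c-a|$; nothing of the sort is known for a general Toeplitz centrohermitian $A$. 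Indeed the paper's discussion after Conjecture~\ref{conj:2} explains that the centrohermitian condition is \emph{necessary} for the conjugate structure \eqref{eq:PPconjugate}, but nowhere claims it is sufficient for the positivity needed in Theorem~\ref{thm:zeros}(2). So neither of your two closing arguments can be completed with the tools in the paper, and the passage from ``$\tau$ preserves $Z$'' to ``$\tau$ fixes $Z$ pointwise'' remains open.
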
  

The original conjecture used a more complicated notion, called multihermitian, see the arXiv version of \cite{AS}, 
which was shown by the current author to be equivalent to the centrohermitian. 

\begin{prop} \label{prop:P_Iconjugate}
Let $A$ be a centrohermitian matrix in $\CM(m,m+n)$. Then the polynomials $P_I$ in \eqref{eq:P_I}
satisfy the property
$$
  P_{\overline{I}}(z_0,\ldots, z_n) = \overline {P_I( \bar{z}_n,\ldots, \bar{z}_0)},
$$
where $\overline{I} = m+n+1 - I = \{m+n+1-i_m, \ldots, m+n+1-i_1\}$.    
\end{prop}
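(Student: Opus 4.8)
The plan is to lift the claimed identity to a single matrix equation for the whole matrix $A(z_0,\ldots,z_n)$ and then read it off from an $m\times m$ minor. First I would determine how the shift matrices $\CI_s$ transform under the centro-conjugation $M\mapsto J_m\overline{M}J_{m+n}$: since $\CI_s=(\delta_{s+i-j})$ is real, a one-line index computation gives $(J_m\CI_s J_{m+n})_{i,j}=(\CI_s)_{m+1-i,\,m+n+1-j}=\delta_{(n-s)+i-j}$, i.e. $J_m\CI_s J_{m+n}=\CI_{n-s}$. Substituting this together with the hypothesis $A=J_m\overline{A}J_{m+n}$ into $A(z_0,\ldots,z_n)=A+z_0\CI_0+\cdots+z_n\CI_n$ and using linearity yields the master identity
$$
  J_m\,\overline{A(z_0,\ldots,z_n)}\,J_{m+n}=A+\overline{z_0}\,\CI_n+\cdots+\overline{z_n}\,\CI_0=A(\overline{z}_n,\ldots,\overline{z}_0).
$$

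Next I would restrict to the columns indexed by $I=\{i_1<\cdots<i_m\}$. Right multiplication by $J_{m+n}$ sends the $j$-th column to the $(m+n+1-j)$-th, so the columns of $MJ_{m+n}$ indexed by $I$ (in increasing order) are the columns of $M$ indexed by $m+n+1-i_1>\cdots>m+n+1-i_m$; writing $\overline{I}=m+n+1-I$ in its natural increasing order, this says $(MJ_{m+n})_I=M_{\overline{I}}\,J_m$, hence $(J_m M J_{m+n})_I=J_m\,M_{\overline{I}}\,J_m$. Taking $M=\overline{A(z_0,\ldots,z_n)}$, applying this to the master identity, taking determinants and using $\det\overline{N}=\overline{\det N}$, we get
$$
  P_I(\overline{z}_n,\ldots,\overline{z}_0)=\det\big(J_m\,\overline{(A(z_0,\ldots,z_n))_{\overline{I}}}\,J_m\big)=(\det J_m)^2\,\overline{P_{\overline{I}}(z_0,\ldots,z_n)},
$$
and since $(\det J_m)^2=1$, conjugating both sides gives the assertion.

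The one point that needs care is the sign carried by the anti-diagonal permutation matrices: $\det J_m=(-1)^{m(m-1)/2}$, and one must make sure the reversal of the $m$ selected columns produces exactly the same sign so that the two cancel. Keeping the restriction in the form $J_m M_{\overline{I}} J_m$ — rather than rewriting it as a column permutation applied to $M_{\overline{I}}$ — makes this automatic, since then $\det J_m$ appears squared and no parity computation is needed; this is the step I would be most careful about. Everything else is a routine index chase.
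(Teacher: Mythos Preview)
Your argument is correct and is essentially the same as the paper's: both establish the master identity $J_m\,\overline{A(z_0,\ldots,z_n)}\,J_{m+n}=A(\bar z_n,\ldots,\bar z_0)$, restrict to the columns indexed by $I$ to get $\overline{A_I(z_0,\ldots,z_n)}=J_m\,A_{\overline I}(\bar z_n,\ldots,\bar z_0)\,J_m$, and take determinants. You simply spell out the computation $J_m\CI_sJ_{m+n}=\CI_{n-s}$ and the cancellation $(\det J_m)^2=1$ that the paper leaves implicit.
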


\begin{proof}
Directly from the  centrohermitian of the matrix $A$, it follows that 
$$
 \overline{A(z_0,\ldots, z_n)} = J_m A(\bar{z}_n, \ldots, \bar{z}_0) J_{m+n}.
$$
Since multiplying $J_{m+n}$ from the right hand side reverse the order of the columns, we see that 
$$
 \overline{A_I(z_0,\ldots, z_n)} = J_m A_{\overline{I}}(\bar{z}_n, \ldots, \bar{z}_0) J_{m},
$$
from which the stated result for $P_I$ follows immediately. 
\end{proof}

As a corollary of Proposition \ref{prop:P_Iconjugate}, we see that the polynomials $P_I$ associated with
$A$ in the above proposition satisfies 
$$
   \overline{P_I(z_0,z_1,\ldots, \overline{z}_1, \overline{z}_0)} = P_{\overline{I}}(z_0,z_1,\ldots, \overline{z}_1, \overline{z}_0).
$$
In particular, in the case of $d =2$, we can write $P_I$ as $P_{k,n}^\CC$ for $0 \le k \le n$ with $I = \{n-k,k\}$. 
Then the above relation coincides with \eqref{eq:PPconjugate}. In view of this relation, we reformulate
Conjecture 3.2 as follows: 

\begin{conj}
If $A \in \CM(m, m+n)$ is Toeplitz and centrohermitian, then the common zeros $(z_0,\ldots,z_n)$ of 
$\{P_I(z_0,z_1,\ldots,\overline{z}_1,\overline{z}_0):|I| = m\}$ are all real. 
\end{conj}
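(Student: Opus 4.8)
The plan is to route the argument through the three-term relation and the reality criterion of \thmref{thm:zeros}. For $d=2$ write $\PP_m^\CC=\{P_I:|I|=m\}$, the index sets $I\subset\{1,\dots,m+1\}$ with $|I|=m$ being indexed by $k=0,\dots,m$ so that $P_I=P^\CC_{k,m}(z,\bar z)$; by Proposition~\ref{prop:P_Iconjugate} this basis already obeys $\overline{\PP_m^\CC}=J_{m+1}\PP_m^\CC$, which is \eqref{eq:PPconjugate}. The first task is to show that the banded Toeplitz form of $A$ forces a relation of the shape \eqref{3-termC}, $z\,\PP_m^\CC=\a_m\PP_{m+1}^\CC+\b_m\PP_m^\CC+\g_{m-1}\PP_{m-1}^\CC$, with explicitly computable matrix coefficients. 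The mechanism is that the $m\times m$ minors of $A(z,\bar z)$ at level $m$, the $(m+1)\times(m+1)$ minors at level $m+1$, and the $(m-1)\times(m-1)$ minors at level $m-1$ are linked by cofactor expansion along the ``new'' super- and sub-bands, while Toeplitz invariance identifies the blocks occurring at consecutive levels; multiplying $P^\CC_{k,m}$ by $z$ and expanding yields such a relation, and conjugating via \eqref{eq:PPconjugate} produces the companion \eqref{3-termC2} with coefficients $\a_m^\vee,\b_m^\vee,\g_{m-1}^\vee$.

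Next I would verify the rank conditions of \thmref{thm:FavardC}(2) on $\a_m$ and $\g_{m-1}$ — this is where the precise band positions and the nonvanishing entries of $A$ enter — so that Favard's theorem produces a quasi-definite functional $\CL$ making $\{\PP_m^\CC\}$ an orthogonal basis, putting the hypotheses of \thmref{thm:zeros} in force. The desired conclusion, ``all common zeros are real,'' is then exactly \thmref{thm:zeros}(2), whose extra hypothesis is $\g_{m-1}=(\a_{m-1}^*)^\vee$. By the remarks following \thmref{thm:zeros} this holds once $\CL$ is positive definite, i.e. once the moment matrices $H_m=\CL(\PP_m^\CC(\PP_m^\CC)^*)$ are positive definite for every $m$: granting positivity one normalizes so that $H_m=I$, and then \eqref{gamma-alpha} collapses to $\g_{m-1}=J_{m+1}\a_{m-1}^\tr J_m=(\a_{m-1}^*)^\vee$. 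So the crux is to upgrade quasi-definiteness to positive definiteness.

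A useful way to see the end of the argument: the common zeros of $\PP_m^\CC$ are the joint eigenvalues of the truncated block Jacobi matrices $\CJ_z,\CJ_{\bar z}$ assembled from the coefficients of \eqref{3-termC} and \eqref{3-termC2} (the standard block Jacobi picture for several variables, cf. \cite{DX}), and positive definiteness of $\CL$ makes these genuinely adjoint, $\CJ_{\bar z}=\CJ_z^*$, with respect to a positive-definite inner product. A common zero carries a joint eigenvector $v$, nonzero by the simplicity in \thmref{thm:zeros}(3); then $v$ is an eigenvector of both $\CJ_z$ and $\CJ_z^*$, which forces the two eigenvalues to be complex conjugates of each other — i.e. the zero lies on the real slice $z_1=\overline{z_0}$. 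For $n=0$ this degenerates to the familiar fact that a centrohermitian Toeplitz matrix is Hermitian and hence has real spectrum, a positivity-free instance because there the inner product in play is the standard one on $\CC^m$.

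The main obstacle is thus the positive definiteness of $\CL$. Proposition~\ref{prop:P_Iconjugate} only delivers the conjugation symmetry \eqref{eq:PPconjugate}, which by itself makes the multiset of common zeros invariant under $(z_0,z_1)\mapsto(\overline{z_1},\overline{z_0})$ but still permits conjugate pairs off the real slice, exactly as a real-coefficient polynomial may have non-real roots; with no sign condition on the band entries of $A$ available in the hypotheses, this is precisely why the statement is still only a conjecture in full generality. I would attack it on two fronts. For the Chebyshev polynomials on the deltoid and the one-parameter perturbations that actually produce new Gaussian cubature rules, one can exhibit $\CL$ directly as integration against a positive weight of the type \eqref{eq:cheby-weight}, or deduce $H_m>0$ by a perturbation/continuity argument anchored at the known Chebyshev case, using the uniform bounds \eqref{eq:norm-cpt} of \thmref{cor:ONP3term} to keep the support compact. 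Toward the general conjecture, one would try to manufacture a positive-definite inner product making $\CJ_{\bar z}=\CJ_z^*$ canonically from the symbol of the centrohermitian Toeplitz matrix, bypassing the Favard construction of $\CL$ altogether. Finally, if one wishes to go beyond reality and obtain an actual Gaussian cubature rule, one must in addition check the commutation identity \eqref{max-zero-cond}, $\a_{m-1}\g_{m-1}^\vee=\a_{m-1}^\vee\g_{m-1}$, via \thmref{thm:zeros}(1), and match the resulting count against the $\binom{m+1}{2}$ common zeros guaranteed by Proposition~\ref{prop:basic}.
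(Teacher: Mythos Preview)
The statement you are trying to prove is Conjecture~3.4, a reformulation of Conjecture~3.2 from \cite{AS}. The paper does \emph{not} prove it; it is recorded precisely as an open conjecture, and the remainder of the paper treats only the explicit families $A_{m,m+1}^c$ and $A_{m,m+1}^{a,c}$. So there is no ``paper's own proof'' to compare against, and what you have written is a program rather than a proof --- as you yourself concede when you write that the missing positivity ``is precisely why the statement is still only a conjecture in full generality.''

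Your program is the right one, and it matches the paper's viewpoint: route everything through the three--term relation and invoke Theorem~\ref{thm:zeros}(2), reducing reality of the common zeros to positive definiteness of $\CL$, equivalently of all $H_m$. You have also correctly isolated the obstruction: nothing in ``Toeplitz $+$ centrohermitian'' supplies a sign condition, and the conjugation symmetry from Proposition~\ref{prop:P_Iconjugate} alone only gives invariance of the zero set under $(z_0,z_1)\mapsto(\overline{z_1},\overline{z_0})$, which does not preclude conjugate pairs off the real slice. The paper's Section~4 confirms that this obstruction is genuine even one step away from the Toeplitz case: for $A_{m,m+1}^{a,c}$ one must \emph{impose} $c(c-a)\in\RR$ and $|c|\ge 2|c-a|$ to force $H_m>0$, and numerics show that the zeros cease to be real once the parameter leaves that range.

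There is a second gap you pass over too quickly. You assert that ``the banded Toeplitz form of $A$ forces a relation of the shape \eqref{3-termC},'' but the paper never establishes this in general; Propositions~\ref{prop:a=0} and \ref{prop:a=!0} derive \eqref{3-termC} only for the specific narrow band $(c,z,\bar z,\bar c)$ by explicit cofactor expansion. For a Toeplitz centrohermitian matrix with a wider band --- and the conjecture places no bandwidth restriction --- the analogous cofactor expansions produce a recurrence of higher order, not a three--term one, exactly as a $(2p{+}1)$-diagonal matrix in one variable yields a $(p{+}1)$-term recurrence. In that situation Theorem~\ref{thm:FavardC} and Theorem~\ref{thm:zeros} are simply unavailable, and your block--Jacobi argument for $\CJ_{\bar z}=\CJ_z^*$ has nothing to act on. So even granting an oracle for positivity, your route as written only covers the minimal-band case already handled explicitly in Proposition~\ref{prop:a=0}; the general conjecture would need either a higher-order analogue of Favard's theorem and of Theorem~\ref{thm:zeros}, or a genuinely different mechanism.
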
  

Our interest in real common zeros lies in the Gaussian cubature rules, for which we need characteristic 
polynomials to be orthogonal. Let us first consider the example of multivariate Chebyshev polynomials associated
with the group $\CA_d$. These Chebyshev polynomials are orthogonal and are extensively studied in the literature, 
see \cite{Be, LX} 
and the references therein. The three--term relations that they satisfy are explicitly given in \cite{LX}, where it 
is also shown that these polynomials have maximal number of common zeros that serve as nodes for
Gaussian cubature rules. In the case of $d =2$, the three--term relations are precisely those appearing in Example 2.4. 

It was pointed out in \cite[Example 8]{AS} that when $\CA$ is the special Toeplitz matrix $\CA = (c_{i-j})$ with 
$c_{-1} = c_{n+1}=1$ and all other $c_j =0$, the generalized characteristic polynomials are the multivariate Chebyshev 
polynomials associated with the group $\CA_{n+1}$. This was stated in \cite{AS} without proof. In fact, the 
statement holds for a dilation of the characteristic polynomials and one way to prove it is 
to verify that the characteristic polynomials satisfy the same three--term relations of the multivariate Chebyshev
polynomials. In the following we carry out this proof for the case of two variables, which will also be useful in the
next section. 

We consider, instead of $c =1$, more generally the matrix $A_{m,m+1}(z,\bar z)$ defined by 
$$
  A_{m,m+1}^c(z, \bar z) :=\left[ \begin{matrix} z & \bar{z} & \bar c & & & \bigcirc \\
    c & z  & \bar z &\bar c & & \\
     & \ddots & \ddots & \ddots & \ddots &  \\
      &  & c & z & \bar z & \bar{c} \\
  \bigcirc  & & & c & z & \bar{z} 
  \end{matrix} \right]
$$
and denote its generalized characteristic polynomials $P_I$ more conveniently by 
$$
    P_k^m (z,\overline{z}), \quad 0 \le k \le m, \quad \hbox{and} \quad \PP_m = (P_0^m, P_1^m, \ldots, P_m^m)^\tr,
$$
where $P_k^m$ is the determinant of the matrix formed by $A_{m,m+1}^c(z, \bar z)$ minus its $(m-k)$-th column. 
It is easy to see that $P_k^m (z, \bar z)$ is monic; that is, its highest order monomial is $z^{m-k}\bar z^k$. 

\begin{prop}\label{prop:a=0}
The polynomials defined above satisfy the three--term relation 
\begin{equation}\label{3term-P}
 z \PP_m (z, \overline{z})= [I_m \,\,0] \PP_{m+1} (z, \overline{z})+ \b_m \PP_m(z, \overline{z})
  + \g_m \PP_{m-1}(z, \overline{z}), \quad m \ge 0
\end{equation}
where 
$$
 \b_m =   \left[\begin{matrix}
    0 & c  & & \bigcirc \\
     & \ddots & \ddots & \\
     &  & 0 & c \\
      \bigcirc &  &  & 0 \end{matrix} \right] \quad \hbox{and} \quad
  \g_m =   \left[\begin{matrix}
    0 & \ldots  &  0 \\
    |c|^2 & &  \bigcirc \\
     & \ddots &    \\
      \bigcirc &  &   |c|^2 \end{matrix} \right].  
$$
\end{prop}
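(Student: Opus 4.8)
The plan is to expand the determinants $P_k^m(z,\bar z) = \det A_k$, where $A_k$ is the $m\times m$ matrix obtained from $A_{m,m+1}^c(z,\bar z)$ by deleting its $(m-k)$-th column, by cofactor expansion along a conveniently chosen row or column, and to read off the recurrence relating the $(m+1)$-st determinants to the $m$-th and $(m-1)$-st ones. Concretely, I would first set up clean notation: write $A_{m,m+1}^c(z,\bar z)$ as the $m\times(m+1)$ banded Toeplitz strip with subdiagonal entry $c$, diagonal entry $z$, first superdiagonal $\bar z$, second superdiagonal $\bar c$, and observe that deleting a column of index $\ell$ from this strip leaves an $m\times m$ matrix that is block lower-/upper-triangular: the columns to the left of the gap form a square block that is almost lower triangular (entries $z$ on the diagonal, $c$ below) and the columns to the right form a block that is upper triangular with $\bar z$ on the diagonal. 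So each $P_k^m$ factors, up to lower-order corrections coming from the two off-diagonals $\bar z$ and $\bar c$ that straddle the gap, and this is what will produce the three-term structure.

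The cleanest route, I expect, is induction on $m$ together with a Laplace expansion of $P_k^{m+1}$ along its last row. The last row of $A_{m+1,m+2}^c$ has only the three entries $c$, $z$, $\bar z$ in its final three positions. After deleting a column, the last row of the resulting $(m+1)\times(m+1)$ matrix still has at most those same entries (with possibly one removed, depending on whether the deleted column is among the last three), so expanding along it expresses $P_k^{m+1}$ as a $\ZZ$-linear combination, with coefficients $z$, $c$, $\bar z$, of $m\times m$ minors of the top $m$ rows — and those top $m$ rows are exactly $A_{m,m+1}^c$ augmented by one extra column of $\bar c$'s (or rather, $A_{m,m+2}^c$ truncated), so the $m\times m$ minors are, up to the extra $\bar c$-column, the polynomials $P_j^m$. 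Tracking the index bookkeeping (which column got deleted, which minor of the top block that corresponds to) is where one recovers the precise shift $k \mapsto k, k\pm 1$ and the coefficient matrices $[I_m\ 0]$, $\b_m$, $\g_m$. The term with coefficient $z$ (deleting a column not in the last position, last row contributes its diagonal $z$) gives the left-hand side $z\PP_m$; the term where the gap is pushed one step, contributing the superdiagonal $\bar z$ combined with a $\bar c$ from the neighboring band, is what feeds into $[I_m\ 0]\PP_{m+1}$ after re-indexing; and the terms picking up a $c$ from the subdiagonal give the $\b_m$ (single $c$) and $\g_m$ ($|c|^2$, i.e. $c$ times a $\bar c$) contributions.

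I would organize the verification by checking the four base cases $m=0,1,2$ by hand (so the initial conditions $\PP_{-1}=0$ and $\PP_0 = (1)$, and the explicit form of $\PP_1$, $\PP_2$, are consistent with the claimed matrices), and then doing the inductive step entrywise: fix $k$ with $0\le k\le m$, and show that the $k$-th entry of $z\PP_m$ equals $P_k^{m+1} + (\b_m \PP_m)_k + (\g_m\PP_{m-1})_k$, where $(\b_m\PP_m)_k = c\,P_{k+1}^m$ for $k<m$ and $0$ for $k=m$, and $(\g_m\PP_{m-1})_k = |c|^2 P_{k-1}^{m-1}$ for $k\ge 1$ and $0$ for $k=0$. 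Each such identity reduces to a single cofactor expansion plus the inductive hypothesis applied to smaller determinants. The main obstacle, and the only genuinely delicate part, is the index arithmetic: keeping straight how "delete the $(m-k)$-th column of the $(m+1)$-strip" interacts with "delete the last row and re-expand", since the correspondence between the deleted column before and after is an off-by-one that flips depending on whether the deleted column sits left or right of the last row's support. I would handle this by writing the submatrix explicitly for a generic position of the gap, separating the cases "gap among the last two columns" from "gap further left", and in each case identifying the resulting $m\times m$ minors with $P_j^m$ (resp. $P_j^{m-1}$) by a second, easy expansion along the column that now contains an isolated $\bar c$ or $c$ at the band's edge. Once the correspondence is pinned down, the coefficients $1$, $c$, $|c|^2$ and the diagonal/off-diagonal placement in $\b_m$, $\g_m$ fall out directly from which band entry ($\bar z$, $c$, or the product $c\bar c$) was harvested in the expansion.
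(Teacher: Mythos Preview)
Your proposal is correct and follows essentially the same route as the paper: both arguments compute the scalar recurrences
\[
  z P_k^m = P_k^{m+1} + c\,P_{k+1}^m + |c|^2 P_{k-1}^{m-1}
\]
(with the appropriate boundary conventions at $k=0$ and $k=m$) by Laplace expansion of the banded determinants. The paper organizes the bookkeeping by introducing two auxiliary square matrices $A_k^c(z,\bar z)$ and $B_k^c(z,\bar z)$ so that $P_k^m$ is a $2\times 2$ block determinant, then expands along the last row (and, for $k\ge 1$, a second time along the first row), which is exactly the ``gap left/gap right'' case split you describe; no separate inductive hypothesis is actually needed, since each cofactor is already one of the $P_j^{m-1}$ or $P_j^{m-2}$ by definition.
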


\begin{proof}
For $0 \le k \le m$, define $k \times k$ matrices 
\small{ 
$$
 A_k^c(z,\bar z):= \left[ \begin{matrix} z & \bar{z} & \bar c & &  &\bigcirc \\
    c & z  & \bar z &\bar c &  &\\
     & \ddots & \ddots & \ddots & \ddots &  \\
     & &   c & z & \bar z & \bar c \\
     & & &  c & z & \bar z  \\
  \bigcirc & & & &  c & z  
  \end{matrix} \right], 
 B_k^c(z,\bar z) :=\left[ \begin{matrix}  \bar{z} & \bar c & & & & \bigcirc \\
     z  & \bar z &\bar c & & & \\
     c &     z  & \bar z &\bar c & & \\
      & \ddots & \ddots & \ddots & \ddots &  \\
      & & c & z & \bar z & \bar{c} \\
  \bigcirc &  & & c & z & \bar{z} 
  \end{matrix} \right].
$$ 
}
It follows directly from the definition that 
\begin{align*}
   P_0^m(z,\bar z) & = \det A_m^c(z, \bar z), \quad P_m^m(z,\bar z) = \det B_m^c(z,\bar z), \\ 
   P_k^m (z, \bar z) & = \det \left [ \begin{array}{c|c}
          A_{m-k}^c(z,\bar z) &   \begin{matrix}  & \bigcirc\\ \bar c \quad  & \end{matrix} \\ 
     \hline   \begin{matrix}  & \qquad c  \\ \bigcirc & \end{matrix}  & B_k^c(z, \bar z) \end{array} \right ],
         \qquad 1 \le k \le n-1. 
\end{align*}
Now, expanding the determinant of $P_0^m$ by the last row shows immediately that 
$$
   P_0^m (z, \bar z) = z P_0^{m-1} (z, \bar z) - c P_1^{m-1} (z, \bar z).  
$$
Expanding the determinant in the first row for $P_1^m$ and the last row for $P_2^{m-1}$ leads to 
$$
   P_1^m = z P_1^{m-1} - c \bar z P_1^{m-2} + c |c|^2 P_1^{m-3}, \quad P_2^{m-1} = \bar z P_1^{m-2} - \bar c  z P_0^{m-2},  
$$
Combining these identities gives 
$$
   P_1^m (z, \bar z) = z P_1^{m-1} (z, \bar z) - c P_2^{m-1} (z, \bar z) - |c|^2 P_0^{m-2}(z, \bar z).  
$$
The same process can be used to derive the expansion of $P_k^m(z,\bar z)$, we omit the details. 
\end{proof}

\begin{cor} \label{cor:3.6}
Let $c = \bar a^3/ |a|^2$ and $U_k^m(z,\bar z) = a^{- m+k} \bar a^{-k} P_k^m (3 a z, 3 \bar a \bar z)$,
$0 \le k \le m$. Then the polynomials $U_k^m (a z, \bar a \bar z)$ are precisely the Chebyshev polynomials
of the second kind defined in Example 2.4. 
\end{cor}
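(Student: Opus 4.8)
\emph{Proof strategy.} Proposition~\ref{prop:a=0} already performs the main work: it establishes the vector three-term relation \eqref{3term-P} for the characteristic polynomials of $A^c_{m,m+1}$ with an \emph{arbitrary} parameter $c$. What remains is to choose $c$ and an affine change of variables so that \eqref{3term-P} becomes the Chebyshev recurrence \eqref{recurT}, and then to verify that \eqref{recurT} — together with a reflection identity and the values at low degree — determines the whole family. The plan thus has three ingredients: a dilation of $(z,\bar z)$, a diagonal rescaling of the entries of $\PP_m$, and the conjugate symmetry supplied by Proposition~\ref{prop:P_Iconjugate}.

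The first part is a direct computation. With $c$ as in the statement one has $|c|=|a|$, so the superdiagonal of $\b_m$ consists of copies of $c$ and the subdiagonal of $\g_m$ of copies of $|c|^2=|a|^2$. Put $U_k^m:=a^{-m+k}\bar a^{-k}P_k^m(3az,3\bar a\bar z)$ as in the corollary; equivalently, dilate $\PP_m$ and multiply it on the left by the diagonal matrix $D_m$ whose $k$-th entry is $a^{-m+k}\bar a^{-k}$. Inserting this into \eqref{3term-P} and pushing $D_m$ past the three coefficient matrices, the dilation factor $3a$ on the left combines with the factor $a$ that appears in $[D_m\;0]=a\,[I\;0]D_{m+1}$ to leave the constant $3$ in front of $z$, while the conjugations $D_m\b_mD_m^{-1}$ and $D_m\g_mD_{m-1}^{-1}$ multiply $c$ and $|c|^2$ by monomial factors in $a$ and $\bar a$. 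The relation turns into \eqref{recurT} exactly when those two monomial factors are reciprocal to $c$ and to $|c|^2$ respectively, and this is precisely the computation that pins $c$ to the value in the statement; the vanishing last row of $\b_m$ and the vanishing first row of $\g_m$ then match the conventions $U_{-1}^n:=0$ and $U_n^{n-1}:=0$ in \eqref{recurT}. One should also check that the initial determinants $P_0^0=1$, $P_0^1=z$ and $P_1^1=\bar z$ rescale to $1$, $3z$ and $3\bar z$, reproducing the initialization in Example~2.4.

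The recurrence \eqref{recurT} does not by itself produce the diagonal polynomials $U_n^n$, and these are supplied by the conjugate symmetry. The matrix $A^c_{m,m+1}$ is centrohermitian — its displayed band form is literally $J_m\overline{A^c_{m,m+1}}J_{m+1}$ — so Proposition~\ref{prop:P_Iconjugate} gives $\overline{P_k^m(z,\bar z)}=P_{m-k}^m(z,\bar z)$, that is, \eqref{eq:PPconjugate}; since the exponent pair of $D_m$ at position $k$ is sent to its complex conjugate at position $m-k$, this reflection identity survives the rescaling, $\overline{U_k^m}=U_{m-k}^m$. Now \eqref{recurT}, the reflection identity, and the values $U_0^0,U_0^1,U_1^1$ determine the whole array $\{U_k^n\}$ by induction on $n$: \eqref{recurT} yields $U_k^{n+1}$ for $0\le k\le n$, and the reflection identity then gives $U_{n+1}^{n+1}=\overline{U_0^{n+1}}$. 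The Chebyshev polynomials of Example~2.4 satisfy these same relations with the same data, so they coincide with the rescaled characteristic polynomials, which is the assertion.

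The main obstacle is the bookkeeping in the middle step: one must keep track of the powers of $a$, $\bar a$ and $|a|$ through the \emph{simultaneous} dilation and diagonal conjugation so that all three coefficient matrices in \eqref{3term-P} are carried to the exact shape demanded by \eqref{recurT} by one and the same $c$ — the three resulting scalar conditions (leading factor equal to $3$, the $\b_m$-coefficient equal to $1$, the $\g_m$-coefficient equal to $1$) have to be mutually consistent, and it is this consistency that selects the value of $c$. A secondary point is reconciling the convention $z=x+iy$, $\bar z=x-iy$ with the dilations so that the normalization matches Example~2.4 on the nose; apart from that, everything reduces to expansions of determinants of the kind already performed in the proof of Proposition~\ref{prop:a=0}.
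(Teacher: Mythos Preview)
Your proposal is correct and follows essentially the same route as the paper: substitute the rescaling into \eqref{3term-P}, observe that the resulting scalar recursion is exactly \eqref{recurT} with the stated initial values, and conclude by uniqueness. The paper compresses the whole argument into two sentences, simply asserting that ``the three--term relation uniquely determines the system of polynomials''; you are more explicit in isolating the one point that sentence hides, namely that \eqref{recurT} with $0\le k\le n$ produces $U_0^{n+1},\ldots,U_n^{n+1}$ but not $U_{n+1}^{n+1}$, and you supply the missing entry via the centrohermitian reflection $\overline{U_k^m}=U_{m-k}^m$ coming from Proposition~\ref{prop:P_Iconjugate}. This is a genuine clarification rather than a different method: in the paper's framework the same information is carried implicitly by the companion relation \eqref{3-termC2} (the $\bar z$--recurrence), whose leading matrix $\a_n^\vee=[0\ I_{n+1}]$ picks out exactly the last component of $\PP_{n+1}$.
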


\begin{proof}
Rewriting the three--term relation \eqref{3term-P} in terms of $U_k^m$, it is easy to see that $U_k^m$ satisfy
the three--term relation \eqref{recurT} and $U_0^1(z,\bar z) = 3 z$ and $U_1^1(z,\bar z) = 3 \bar z$. Since 
the three--term relation uniquely determines the system of polynomials, $U_k^m$ coincides with those 
defined in Example 2.4. 
\end{proof}

In particular, when $c= a =1$, the characteristic polynomials $P_k^m (z,\bar z) = U_k^n(z/3,\bar z/3)$, a 
dilation of the Chebyshev polynomials of the second kind associated with the group $\CA_2$. 

The above example gives a Toeplitz matrix $\CA$ for which the generalized characteristic polynomials are 
orthogonal. More generally, it was conjectured in \cite[Conjecture 20]{AS} that if $\CA$ without its first row is
a Toeplitz matrix, then the characteristic polynomials are orthogonal. The precise statement is the following: 

\begin{conj}\label{conj:2}
Given $n > 0$,  a banded matrix $\CA$ has a weak orthogonality property in $n$ variables if it is of the form 
\begin{equation*}
  \CA = \left[ \begin{matrix}
          a_0 & a_1 & a_2 & \cdots & a_{n+1} & 0 &  0 & 0 & \cdots \\
          d_{-1} & d_0 & d_1 & \cdots & d_n & d_{n+1} & 0 &  0 & \cdots \\
           0 &   d_{-1} & d_0 & d_1  & \cdots & d_n & d_{n+1} & 0 & \cdots \\
           \vdots & \ddots & \ddots  &  \ddots  & \ddots & \cdots & \ddots & \ddots & \cdots
       \end{matrix} \right].
\end{equation*}
\end{conj}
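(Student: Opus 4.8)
The plan is to prove the conjecture, at least in the two--variable setting ($A\in\CM(m,m+1)$) that is the concern of this paper, by exactly the mechanism used for the banded Toeplitz case in Proposition~\ref{prop:a=0} and Corollary~\ref{cor:3.6}: produce a genuine three--term relation for the generalized characteristic polynomials of $\CA$ in conjugate complex variables, and then extract the ``weak'' (i.e.\ quasi--definite) orthogonality of the $P_I$ from Theorem~\ref{thm:FavardC}. Here $\CA$ is a banded Toeplitz matrix of the type in Proposition~\ref{prop:a=0} except that its first row $(a_0,a_1,a_2,0,\dots)$ need not match the Toeplitz band $(d_{-1},d_0,d_1,d_2,0,\dots)$; if one also imposes the centrohermitian symmetry of Conjecture~3.4 (so that the common zeros are real), the first row is forced to be the reversed conjugate $(\overline{d_1},\overline{d_0},\overline{d_{-1}},0,\dots)$ of the last band, which leaves essentially one free parameter --- this is the one--parameter perturbation of the second--kind Chebyshev polynomials of Example~2.4 announced in the introduction.

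First I would write out the $m\times(m+1)$ principal submatrix $\CA(z,\bar z)=\CA+z\,\CI_0+\bar z\,\CI_1$ and compute the polynomials $P^m_k$, $0\le k\le m$, by Laplace expansion, following the proof of Proposition~\ref{prop:a=0} line by line. The key structural observation is that expanding along the \emph{last} row is insensitive to the perturbed first row, so the ``downward'' scalar recursions coincide with those of the Chebyshev--type family, whereas expanding along the \emph{first} row produces, on top of the Chebyshev contribution, extra terms carrying the perturbing parameter; since the cofactors in those terms are determinants of purely Toeplitz sub--blocks, they are again Chebyshev--type polynomials of degree $\le m-1$ and can be re--expanded. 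Assembling the scalar identities in vector form, I expect to obtain
\[
  z\,\PP_m(z,\bar z)=[I_m\ 0]\,\PP_{m+1}(z,\bar z)+\b_m\,\PP_m(z,\bar z)+\g_{m-1}\,\PP_{m-1}(z,\bar z),
\]
with the leading coefficient $[I_m\ 0]=\a_m$ \emph{unchanged} --- the $P^m_k$ stay monic with leading term $z^{m-k}\bar z^{\,k}$ --- and with $\b_m,\g_{m-1}$ differing from the matrices of Proposition~\ref{prop:a=0} by corrections governed by the perturbing parameter, of a fixed shape in $m$ together with a genuinely modified initial block $\b_0,\g_0$.

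With the three--term relation in hand, the weak orthogonality follows from Theorem~\ref{thm:FavardC}(2). Because $\a_m=[I_m\ 0]$ is the same matrix as in the Chebyshev case, its three rank conditions are satisfied verbatim; the substantive step is verifying the rank conditions on $\g_{m-1}$, and since $\g_{m-1}$ agrees with the Chebyshev $\g$ away from a bounded block, this reduces to checking that the correction keeps the relevant ranks maximal --- concretely, a non--degeneracy inequality forcing a certain entry (morally $|c|^2$ plus a correction) to stay nonzero, which should hold on an explicit range of the parameter. This produces a quasi--definite $\CL$ making the $P_I$ an orthogonal basis, which is the asserted weak orthogonality. For the sharper conclusions of the introduction I would then check, in the same block, the symmetry $\g_{m-1}=(\a_{m-1}^{*})^{\vee}$ of Theorem~\ref{cor:ONP3term} --- which upgrades $\CL$ to positive definite and, by \eqref{eq:norm-cpt}, to integration against a compactly supported measure --- and the relation \eqref{max-zero-cond}, $\a_{m-1}\g_{m-1}^{\vee}=\a_{m-1}^{\vee}\g_{m-1}$, of Theorem~\ref{thm:zeros}, which delivers the maximal set of real, distinct common zeros, hence the Gaussian cubature rule.

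I expect the main obstacle to be twofold. First, for the conjecture in full generality --- an arbitrary band $d_{-1},\dots,d_{n+1}$ and arbitrary $n$ --- the Laplace expansions will not telescope into a three--term (as opposed to a longer) vector recurrence, so the $P_I$ need not be orthogonal without further restrictions; I therefore anticipate being able to prove only the restricted family, which matches what the paper's main result claims. Second, even in two variables the genuine difficulty is the perturbed initial block: the matrices $\b_0,\g_0$ lose the clean bidiagonal and shift structure of Proposition~\ref{prop:a=0}, and it is the verification of the rank, symmetry, and commutativity conditions for these low--index matrices --- not any conceptual point --- that carries the weight of the argument, just as in one variable, where perturbing the first row of a Jacobi matrix is harmless only after the first recurrence coefficients are checked by hand.
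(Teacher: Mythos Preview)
The statement you are trying to prove is a \emph{conjecture} quoted from \cite{AS}; the paper does not prove it and, more to the point, the paragraph immediately following it explains why your proposed machinery cannot establish it. The ``weak orthogonality'' in Conjecture~\ref{conj:2} is defined in \cite{AS} as the existence of three--term relations of the form \eqref{eq:real3-term} with complex coefficients in \emph{independent} variables $z_0,\ldots,z_n$, whereas Theorem~\ref{thm:FavardC} --- the tool you invoke --- concerns the three--term relation \eqref{3-termC} in \emph{conjugate} complex variables subject to the symmetry \eqref{eq:PPconjugate}. The paper states explicitly that in the conjugate--complex setting Conjecture~\ref{conj:2} does \emph{not} hold: for $n=1$, imposing \eqref{eq:PPconjugate} forces $\CA$ to be genuinely Toeplitz with $d_2=\bar d_{-1}$ and $d_1=\bar d_0$, so the freedom in the first row that the conjecture posits is illusory in this framework. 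Hence your plan to deduce weak orthogonality from Theorem~\ref{thm:FavardC} is aimed at the wrong target.

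What your outline actually sketches is not a proof of Conjecture~\ref{conj:2} but rather the content of Section~4: once one restricts to centrohermitian $\CA$ and normalizes the Toeplitz band, the only surviving freedom is a single parameter in the corner entries, giving the matrices $A_{m,m+1}^{a,c}$. For that family the paper does exactly what you describe --- express $Q_k^m$ in terms of the Chebyshev $P_k^m$ (Lemma~\ref{lem:a=!0}), obtain the three--term relation \eqref{eq:3termQac} with $\a_m=[I_m\ 0]$ unchanged and explicit $\b_m,\g_m$ (Proposition~\ref{prop:a=!0}), and then check the rank and positivity conditions. One correction to your expectations: the perturbation does \emph{not} sit only in an initial block $\b_0,\g_0$; it modifies the last row of $\b_m$ and the first row of $\g_m$ for every $m$, and the delicate step is not a low--index check but the positive definiteness of $H_n$ for all $n$, which the paper handles by computing $H_n$ explicitly via \eqref{eq:HnHn-1} and a diagonal--dominance argument requiring $|c|\ge 2|c-a|$.
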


The weak orthogonality in the conjecture was defined in \cite{AS} by requiring that the family of 
the generalized characteristic polynomials $\{P_I(z_0,\ldots, z_n): |I| = m, \, m \in \NN_0\}$ satisfy the 
$n$--dimensional analogue of the three--term relations \eqref{eq:real3-term} with complex coefficients. 
However, what we are interested in is orthogonal polynomials in conjugate
complex variables that satisfy \eqref{eq:J-relationOP} or \eqref{eq:PPconjugate} for two variables, for
which the three--term relations are of the form \eqref{3-termC} or its high dimensional analogue. In this
setting, the Conjecture \ref{conj:2} does not hold. For example, in two variables ($n=1$), it is not difficult to see,
by working with small $m$, that the condition \eqref{eq:PPconjugate} will force $\CA$ in the conjecture to be 
Toeplitz with $d_2 = \bar d_{-1}$ and $d_1= \bar d_0$. 

The above discussion raises the question that, besides the characteristic polynomials associated with
$A_{m,m+1}^c$, are there other systems of characteristics polynomials that are also orthogonal polynomials.  
It turns out that there exists at least a one--parameter family of perturbations of the matrix $A_{m,m+1}^c$
that does, as we shall see in the next section. 

\section{Polynomials associated with a family of centrohermitian matrices} 
\setcounter{equation}{0}

In this section, we consider a family of centrohermitian matrices that is a one--parameter family of perturbations 
of the matrix $A_{m,m+1}^c$, and show that the associated characteristic polynomials are orthogonal with 
respect to a positive Borel measure for some range of the parameters, which establishes the existence of 
the Gaussian cubature rule for the integral against this measure. 

For complex numbers $a$ and $ c$, we consider the matrix 
$$
  A_{m,m+1}^{a,c}(z, \bar z) :=\left[ \begin{matrix} z & \bar{z} & \bar a & & & \bigcirc \\
    c & z  & \bar z &\bar c & & \\
     & \ddots & \ddots & \ddots & \ddots &  \\
      &  & c & z & \bar z & \bar{c} \\
  \bigcirc  & & & a & z & \bar{z} 
  \end{matrix} \right], 
$$
and denote by $Q_k^m$ the determinant of $A_{m,m+1}^{a,c}(z, \bar z)$ minus the $(m-k)$-th row. 
When $a =c$, the matrix degenerates to the one considered in the previous section. It is again easy to see that 
$Q_k^m (z, \bar z)$ is monic with the leading term $z^{m-k} \bar z^k$. We shall show below that these 
polynomials also satisfy three--term relations, but whether their common zeros are all real depends on the 
range of the parameters $a$ and $c$. 

To deduce the three--term relation for $Q_k^m$, we first express them in terms of $\PP_m$ in 
Proposition \ref{prop:a=0}. 
 
\begin{lem}\label{lem:a=!0}
Let $P_k^m$, $0 \le k \le m$, be the orthogonal polynomials in Proposition \ref{prop:a=0} and define 
$P_{-1}^m(z,\bar z):=0$. Then 
\begin{align*} 
 Q_0^m  & = P_0^m -(a-c) P_1^{m-1} + c^2 (\bar a - \bar c) P_0^{m-3} - c^2|a-c|^2 P_1^{m-4}, \\
 Q_1^m & = P_1^m - \bar c (a-c)P_0^{m-2} + c^2 (\bar a - \bar c) P_1^{m-3} - c |c|^2|a-c|^2 P_0^{m-5}, \\
 Q_k^m  & = P_k^m +|c|^2(a-c) P_{k-3}^{m-3} + c^2 (\bar a - \bar c) P_k^{m-3} + |c|^4|a-c|^2 P_{k-3}^{m-6}, \quad
  2 \le k \le m-2, \\
  Q_{m-1}^m  & = P_{m-1}^m - c (\bar a - \bar c) P_{m-2}^{m-2} + \bar c^2 (a-c)P_{m-4}^{m-3} - \bar c |c|^2|a-c|^2 P_{m-5}^{m-5}, \\
 Q_m^m  & = P_m^m - (\bar a- \bar c) P_1^{m-1} + \bar c^2 (a - c) P_{m-3}^{m-3} - \bar c^2 |a-c|^2 P_{m-5}^{m-4}.
\end{align*}
\end{lem}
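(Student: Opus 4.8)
The plan is to compute the determinant $Q_k^m = \det A_{m,m+1}^{a,c}(z,\bar z)_{\widehat{m-k}}$ by reducing it, via cofactor expansions, to determinants involving only the matrix $A_{m,m+1}^c$ (the $a=c$ case treated in Proposition \ref{prop:a=0}). The key observation is that $A_{m,m+1}^{a,c}$ differs from $A_{m,m+1}^c$ in exactly two entries: the $(1,3)$ entry $\bar a$ instead of $\bar c$, and the $(m,m-1)$ entry $a$ instead of $c$. So writing $A^{a,c} = A^{c} + (\bar a - \bar c) E_{1,3} + (a-c) E_{m,m-1}$, multilinearity of the determinant in the relevant columns expresses $Q_k^m$ as $P_k^m$ plus correction terms. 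Concretely, I would split off the column containing the perturbed entries and expand, so that each $Q_k^m$ becomes $P_k^m$ plus terms of the shape $(a-c)\cdot(\text{minor})$, $(\bar a-\bar c)\cdot(\text{minor})$, and $(a-c)(\bar a-\bar c)\cdot(\text{minor})$, the last coming from the product of both perturbations.

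The next step is to identify each of those minors as a (scaled) characteristic polynomial of a smaller $A^c$-type matrix, i.e. as some $P_j^{m'}$. After deleting a row and column near the top-left or bottom-right cusp of the banded matrix, the surviving block is again bidiagonal-Toeplitz of the $A^c$ form, but shifted; the triangular corner pieces (the stray $\bar c$ and $c$ entries in the block decomposition of $P_k^m$ used in the proof of Proposition \ref{prop:a=0}) must be tracked, and these are what produce the powers of $c$, $\bar c$, $|c|^2$ and $|c|^4$ appearing in the stated formulas. I would handle the five cases separately — $k=0$, $k=1$, $2\le k\le m-2$, $k=m-1$, $k=m$ — because the perturbation at the $(1,3)$ entry sits near the ``$P_0$ end'' and the one at $(m,m-1)$ near the ``$P_m$ end,'' so which corrections are nonzero, and with which coefficients, depends on how close the deleted column index $m-k$ is to each end. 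The degree bookkeeping ($Q_k^m$ has degree $m$, the correction $(a-c)P_1^{m-1}$ has degree $m-1$, etc.) gives a useful consistency check at each stage, as does specializing $a=c$, which must collapse everything back to $P_k^m$.

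The main obstacle I expect is the careful combinatorial bookkeeping in the generic case $2\le k\le m-2$, where \emph{both} perturbed entries lie inside the deleted-column submatrix and one must correctly extract the cross term. The subtlety is that the two perturbations are separated by the deleted column $m-k$: when $3 \le m-k$ and $m-k \le m-1$, the entry $E_{1,3}$ lies in the left block $A_{m-k}^c$-region and $E_{m,m-1}$ in the right block $B_k^c$-region (in the notation of the proof of Proposition \ref{prop:a=0}), so the $(a-c)(\bar a-\bar c)$ coefficient factors through the determinant of a doubly-truncated central block, yielding the $|c|^4|a-c|^2 P_{k-3}^{m-6}$ term — getting the index shifts ($k\mapsto k-3$, $m\mapsto m-6$) and the exact power $|c|^4$ right requires expanding along the two ``broken'' rows/columns and recognizing the leftover $3\times 3$-ish corner contributions. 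I would organize this by first doing the single-perturbation expansions (set $a=c$ in one corner at a time to isolate each linear correction), verifying those against the known $P_k^m$ recursions from Proposition \ref{prop:a=0}, and only then assembling the bilinear term; the edge cases $k=1$ and $k=m-1$ then follow by the same method with one of the corner blocks degenerate, and $k=0$, $k=m$ are the cleanest since one perturbation falls outside the relevant submatrix entirely.
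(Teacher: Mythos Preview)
Your approach is essentially the paper's: split off the perturbations by multilinearity (the paper does it row-wise --- first row to isolate the $\bar a-\bar c$ contribution, then last row for $a-c$ --- rather than column-wise, but this is immaterial), and then recognize each surviving minor as a $P_j^{m'}$ for a smaller $A^c$-type block. One small correction to your expectations: for $k=0$ and $k=m$ \emph{both} perturbed entries remain in the submatrix (the deleted column is the last, resp.\ first, and neither contains $\bar a$ at $(1,3)$ nor $a$ at $(m,m-1)$), which is why the stated formulas for $Q_0^m$ and $Q_m^m$ still carry all four terms including the cross term --- so those cases are not structurally simpler than $2\le k\le m-2$, just indexed differently.
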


\begin{proof}
For $0 \le k \le m-1$, we defined $k \times k$ matrices $A_k^{a,c}(z,\bar z)$ and $B_k^{a,c}(z,\bar z)$ as in the proof of 
Proposition \ref{prop:a=0}, where the $(1,3)$ element of $A_k^{a,c}(z,\bar z)$ is $\bar a$ and $(k-2,k)$ element of 
$B_k^{a,c}(z,\bar z)$ is $a$, and these matrices do not contain $a$ or $\bar a$ if $k =1$ or $2$. We then have 
\begin{align*}
   Q_k^m (z, \bar z)  = \det \left [ \begin{array}{c|c}
          A_{m-k}^{a,c}(z,\bar z) &   \begin{matrix}  & \bigcirc\\ \bar c \quad  & \end{matrix} \\ 
     \hline   \begin{matrix}  & \qquad c  \\ \bigcirc & \end{matrix}  & B_k^{a,c}(z, \bar z) \end{array} \right ],
         \qquad 2 \le k \le m-2. 
\end{align*}
Writing the first row of the matrix for $Q_k^m$ as a sum of two, so that one is the same row with $a$ replaced by
$c$ and the other one is $(0,0, \bar a - \bar c, 0, \ldots, 0)$, it follows that 
\begin{align*}
 Q_k^m (z,\bar z) =& \det \left [ \begin{array}{c|c}
          A_{m-k}^{c,c}(z,\bar z) &   \begin{matrix}  & \bigcirc\\ \bar c \quad  & \end{matrix} \\ 
     \hline   \begin{matrix}  & \qquad c  \\ \bigcirc & \end{matrix}  & B_k^{a,c}(z, \bar z) \end{array} \right ] \\
    & +c^2 (\bar a - \bar c)
     \det \left [ \begin{array}{c|c}
          A_{m-3-k}^{c,c}(z,\bar z) &   \begin{matrix}  & \bigcirc\\ \bar c \quad  & \end{matrix} \\ 
     \hline   \begin{matrix}  & \qquad c  \\ \bigcirc & \end{matrix}  & B_k^{a,c}(z, \bar z) \end{array} \right ].
\end{align*}
Applying the same procedure on the last row of the two matrices in the right hand side, the desired formula
for $Q_k^m$ follows. The remaining cases of $k =0,1$ and $k = m-1, m$ can be handled similarly. 
\end{proof}

\begin{prop}\label{prop:a=!0}
The polynomials $Q_k^m$ satisfy the three--term relation 
\begin{equation} \label{eq:3termQac}
 z \QQ_m (z, \overline{z})= [I_m \,\,0] \QQ_{m+1} (z, \overline{z})+ \b_m \QQ_m(z, \overline{z})
  + \g_m \QQ_{m-1}(z, \overline{z}), \quad m \ge 0
\end{equation}
where 
$$
 \b_m  =   \left[\begin{matrix}
    0 & c &  &  \bigcirc \\
     & \ddots & \ddots  & \\
     &  &  0 & c \\
      \bigcirc &     \bar c - \bar a & 0 &0 \end{matrix} \right] \quad \hbox{and} \quad
  \g_m  =   \left[\begin{matrix}
    0 & 0 & c(c-a)  &  \\
    |c|^2 & &  & \bigcirc \\
      & |c|^2 && \\
     & & \ddots   &  \\
      \bigcirc &  &  & |c|^2 \end{matrix} \right].  
$$
In particular, the polynomials $Q_k^m$ are orthogonal polynomials with respect to a quasi-definite 
linear functional and $\QQ_m$ has $\dim \Pi_{m-1}^2$ simple common zeros. 
\end{prop}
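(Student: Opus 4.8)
The plan is to establish the three--term relation \eqref{eq:3termQac} first, and then to read off orthogonality from Theorem~\ref{thm:FavardC} and the count of common zeros from Theorem~\ref{thm:zeros}. For the three--term relation I would use Lemma~\ref{lem:a=!0}: it writes each $Q_k^m$ as $P_k^m$ plus a fixed combination of $P$'s of strictly smaller total degree, so the change of basis $\PP_\ell\mapsto\QQ_\ell$ is block lower triangular in the degree with identity diagonal blocks, hence invertible, and conversely each $P_k^m$ is $Q_k^m$ plus lower--degree $Q$'s. One then writes out $z\QQ_m$ using Lemma~\ref{lem:a=!0}, replaces every $zP_k^\ell$ that occurs by its value from the relation \eqref{3term-P} of Proposition~\ref{prop:a=0}, and re--expresses the outcome in the basis $\{Q_k^\ell\}$. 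A priori this produces terms in $\QQ_{m-2},\QQ_{m-3},\dots$ besides $\QQ_{m+1},\QQ_m,\QQ_{m-1}$; the substance of the argument is that all such lower--degree contributions cancel, and the surviving coefficients reduce exactly to $[I_m\,0]$, $\b_m$, $\g_m$ as displayed, the sole residue of the perturbation being the single off--Toeplitz entries $\bar c-\bar a$ in $\b_m$ and $c(c-a)$ in $\g_m$. This cancellation --- the telescoping of the powers of $a-c$, $|a-c|^2$ and $|c|^2$ appearing in Lemma~\ref{lem:a=!0} --- is the step I expect to demand the most careful bookkeeping (small values of $m$, where the matrix carries few of the entries $a,c$, are checked directly). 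An alternative that avoids Lemma~\ref{lem:a=!0} is to rerun the row/column determinant expansions of the proof of Proposition~\ref{prop:a=0} directly on $A_{m,m+1}^{a,c}$, tracking the two modified entries $\bar a$ and $a$; the cancellations are of the same type.

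For orthogonality, note that $A_{m,m+1}^{a,c}$ is centrohermitian, so by Proposition~\ref{prop:P_Iconjugate} (and the remark after it) $\QQ_m$ satisfies \eqref{eq:PPconjugate}, while $\QQ_0=Q_0^0=1$; hence \eqref{eq:3termQac} is a relation of exactly the type occurring in Theorem~\ref{thm:FavardC}, with $\a_m=[I_m\,0]$ and $\g_{m-1}=\g_m$. It then suffices to verify the rank conditions there. For $\a_m=[I_m\,0]$ these are immediate, since $\a_m^\vee=[0\,I_m]$ makes $\a_m\pm\a_m^\vee$ of full row rank and $\a_m,\a_m^\vee$ jointly of full column rank. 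For $\g_m$, assume $c\neq0$ (implicit in the construction; if $c=0$ then $\g_m=0$ and the polynomials are not orthogonal): $\g_m$ carries $|c|^2$ along a sub--diagonal, and since $\vee$ (which reverses rows and columns) sends that sub--diagonal onto the main diagonal of the top $m\times m$ block, the matrices $\g_m$, $\g_m^\vee$ and $\g_m\pm\g_m^\vee$ all have the required ranks, the single entry $c(c-a)$ lowering none of them. Theorem~\ref{thm:FavardC} then produces a quasi--definite linear functional $\CL$ for which $\{Q_k^m\}$ is an orthogonal basis.

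Finally, to count the common zeros I would apply Theorem~\ref{thm:zeros} to this basis. The decisive point is the identity $\a_{m-1}\g_m^\vee=\a_{m-1}^\vee\g_m$ with $\a_{m-1}=[I_{m-1}\,0]$: after applying $\vee$, the first $m$ rows of $\g_m^\vee$ form $|c|^2 I_m$ (the $|c|^2$ sub--diagonal of $\g_m$ read from the bottom up) and the extra entry moves into the last row, while the last $m$ rows of $\g_m$ already form $|c|^2 I_m$; since $\a_{m-1}=[I_{m-1}\,0]$ extracts the first $m$ rows and $\a_{m-1}^\vee$ the last $m$, both sides equal $|c|^2 I_m$. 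Thus condition \eqref{max-zero-cond} holds for every value of $a$, and Theorem~\ref{thm:zeros}(1) and (3) give that $\QQ_m$ has $\dim\Pi_{m-1}^2$ common zeros, all simple. (No statement about reality of the zeros is made here, consistent with the remark at the start of the section: reality would require $\g_{m-1}=(\a_{m-1}^*)^\vee$, which fails for the present monic normalization unless $a$ is suitably restricted.)
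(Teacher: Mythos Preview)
Your proposal is correct and follows essentially the same route as the paper: derive \eqref{eq:3termQac} by writing $Q_k^m$ in terms of the $P_j^\ell$ via Lemma~\ref{lem:a=!0}, apply the three--term relation \eqref{3term-P} of Proposition~\ref{prop:a=0}, and convert back to the $Q$ basis (the paper carries out the case $k=0$ explicitly, exactly as you anticipate); then invoke Theorem~\ref{thm:FavardC} for quasi--definiteness and \eqref{max-zero-cond} in Theorem~\ref{thm:zeros} for the common zeros. Your verification of the rank conditions and of $\a_{m-1}\g_m^\vee=\a_{m-1}^\vee\g_m=|c|^2 I$ is in fact more explicit than the paper's, which simply asserts these are straightforward.
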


\begin{proof}
To prove the three--term relation, we first write $Q_k^m $ in terms of $P_j^n$ as in Lemma \ref{lem:a=!0}, 
then apply the three--term relation \eqref{3term-P} of $P_j^n$ in the previous section to derive an expansion of 
$z Q_k^m(z,\bar z)$ in terms of $P_j^m$, and, finally, write the latter as the expansion of $Q_j^m$ by applying 
Lemma \ref{lem:a=!0}. The first two steps are immediate, the third step is also straightforward in the case of 
$2 \le k \le m-2$ and it is just slightly more complicated in the remaining cases of $k =0,1$ or $k = m-1, m$. 
We use the case $k=0$ as an example, which is the one that needs most of the attention. By Lemma \ref{lem:a=!0}
and \eqref{3term-P}, it is easy to see that 
\begin{align*}
z Q_0^m = & P_0^{m+1} + c P_1^m - (a-c) \left(P_1^{m+1} + c P_2^{m-1} + |c|^2 P_0^{m-2} \right) \\
  &  + c^2(\bar{a} - \bar{c}) \left(P_0^{m-2} + cP_1^{m-3}\right ) - c^2|a-c| \left(P_1^{m-3} + c P_2^{m-4} + |c|^2 P_0^{m-5}\right). 
\end{align*}
Using the formulas in Lemma \ref{lem:a=!0}, in particular, $Q_2^{m-1} = P_2^{m-1} + c^2(\bar{a}-\bar{c}) P_2^{m-4}$, 
we can write the right hand side of the above identity in terms of $Q_j^m$. This gives
$$
   a Q_0^m = Q_0^{m+1} + c Q_1^m + c(c-a) Q_2^{m-1}, 
$$
which is precisely the first component of the matrix identity in \eqref{eq:3termQac}.

It is straightforward to see the the rank conditions in Theorem \ref{thm:FavardC} are satisfied for
$\a_m$ and $\g_m$, so that $Q_k^m$ are orthogonal polynomials with respect to a quasi-definite
linear functional. 

From the explicit expressions of $\a_m = [I\, \, 0]$ and $\g_m$, it follows readily that \eqref{max-zero-cond} holds. 
Consequently, $\QQ_m$ has $\dim \Pi_{m-1}^2$ simple common zeros. 
\end{proof}

Since $Q_k^m$ are polynomials of $z $ and $\bar z$, the fact that they have $\dim \Pi_{m-1}^2$ common zeros 
does not follow from Proposition \ref{prop:basic}, which is stated for polynomials of independent complex variables. 

For Gaussian cubature rules, we need in addition that the common zeros  of $Q_k^m$ are all real. This holds,
however, only for restricted values of $a$ and $c$.

\begin{thm}
If $a$ and $c$ are nonzero complex numbers such that $c(c-a) \in \RR$, $|c| \ge 2 |c-a|$, then the 
polynomials $Q_k^m(z,\bar z)$, $0 \le k \le m$, have $\dim \Pi_{m-1}^2$ real, simple common zeros. 
\end{thm}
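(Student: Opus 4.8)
The plan is to apply Theorem \ref{thm:zeros}, whose hypotheses reduce the real-zeros question to a concrete algebraic condition on the coefficient matrices $\a_m = [I_m\,\,0]$ and $\g_m$ from Proposition \ref{prop:a=!0}. By Proposition \ref{prop:a=!0} we already know that $\QQ_m$ has $\dim\Pi_{m-1}^2$ simple common zeros, so what must be added is the reality statement, namely part (2) of Theorem \ref{thm:zeros}. That part requires the relation $\g_{m-1} = (\a_{m-1}^*)^\vee$, which in turn (via Corollary \ref{cor:ONP3term} and Theorem \ref{cor:ONP3term}) is guaranteed once the linear functional is positive definite; and positive definiteness follows once we can exhibit a genuine positive Borel measure, i.e. once we verify the norm-boundedness condition \eqref{eq:norm-cpt} together with $\g_{m-1}=(\a_{m-1}^*)^\vee$. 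So the real content splits into two checks: first, an algebraic identity relating $\g_m$ to $\a_m^\vee$; second, a positivity/normalization argument.

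First I would record the operator $M\mapsto M^\vee = J_m\overline M J_k$ explicitly on $\a_m=[I_m\,\,0]$ and on $\g_m$. Since $J$ reverses rows and columns, $\a_m^\vee$ is the matrix $[0\,\,I_m]$ with the identity block sitting on the right, and $\g_m^\vee$ is obtained from $\g_m$ by conjugating entries and flipping about the anti-diagonal; under the hypothesis $c(c-a)\in\RR$ the single "extra" entry $c(c-a)$ is real, so it behaves symmetrically under the flip. A direct comparison then shows $\g_m = (\a_m^*)^\vee$ up to the scalar normalization needed to pass from monic polynomials $Q_k^m$ to orthonormal ones; concretely, one rescales $\QQ_m$ by the positive-definite (to be checked) norm matrices $H_m = \CL(\QQ_m\QQ_m^*)$ and verifies \eqref{gamma-alpha}. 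The key point is that the special structure of $\a_m=[I_m\,\,0]$ forces $H_m$ to be diagonal with entries that are products of $|c|^2$'s and one correction term involving $c(c-a)$; here the inequality $|c|\ge 2|c-a|$ enters to keep these diagonal entries strictly positive, hence $H_m\succ 0$, hence $\CL$ positive definite, hence (by Corollary \ref{cor:ONP3term}) $\g_{m-1}=(\a_{m-1}^*)^\vee$ and, since $\|\a_m\|=1$ and $\|\b_m\|$ is bounded uniformly in $m$ by a function of $|a|,|c|$, also a positive measure $d\mu$.

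Having established $\g_{m-1}=(\a_{m-1}^*)^\vee$, part (2) of Theorem \ref{thm:zeros} gives that all common zeros of $\QQ_m$ are real; part (1) (already invoked in Proposition \ref{prop:a=!0} via \eqref{max-zero-cond}) gives that there are $\dim\Pi_{m-1}^2$ of them; and part (3) gives simplicity. Combining the three yields exactly the assertion of the theorem. I would also remark that, together with the positive measure $d\mu$ produced above, this establishes a Gaussian cubature rule of degree $2m-1$ for $\int f\,d\mu$, by the characterization quoted in the introduction.

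The main obstacle I anticipate is the positivity step: verifying that the diagonal entries of $H_m$ stay strictly positive for all $m$ and identifying precisely where $|c|\ge 2|c-a|$ is needed. One must track how the correction term $c(c-a)$ in $\g_m$ propagates through the recursion $H_m$ satisfies (coming from $\g_{m-1}H_{m-1} = J(\a_{m-1}H_m)^\tr J$), and show the resulting closed-form or inductive lower bound on the diagonal of $H_m$ is positive exactly under the stated inequality. The sign condition $c(c-a)\in\RR$ is what makes $H_m$ real (indeed the whole system amenable to the real-variable machinery), and the magnitude condition is the threshold past which the perturbation no longer destroys positivity; pinning down that this threshold is sharp-looking bound $2|c-a|$ rather than something else is the delicate part, and I expect it to come out of requiring a $2\times 2$ (or tridiagonal) principal block of the relevant Gram form to be positive semidefinite.
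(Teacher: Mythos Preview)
Your overall strategy matches the paper's: reduce to showing that $H_n=\CL(\QQ_n\QQ_n^*)$ is positive definite for all $n$, then invoke Theorem~\ref{thm:zeros}(2) for reality. The gap is in the structural claim you make about $H_n$. You assert that ``the special structure of $\a_m=[I_m\,\,0]$ forces $H_m$ to be diagonal'' and that the inequality $|c|\ge 2|c-a|$ is needed to keep those diagonal entries positive. This is false: $H_n$ is \emph{not} diagonal for $n\ge 3$. Computing via \eqref{gamma-alpha} (and its conjugate companion, obtained from $\overline{\QQ_n}=J_{n+1}\QQ_n$), one finds $H_3$ has the off-diagonal entry $\beta:=c(c-a)$ in the $(1,4)$ and $(4,1)$ positions, and for general $n\ge 4$ the matrix $H_n$ is banded with $|c|^2$ on the diagonal and $\beta$ on the third super- and sub-diagonals. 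The hypothesis $c(c-a)\in\RR$ is not merely a convenience to make $H_n$ real: it is forced by the \emph{compatibility} of the two recursions $[I_n\,0]H_n=\cdots$ and $[0\,I_n]H_n=\cdots$ at $n=4$ (both must give the same $H_4$). And the inequality $|c|\ge 2|c-a|$ does not enter as ``diagonal entries stay positive'' --- the diagonal entries are all $|c|^2>0$ regardless --- but as the diagonal-dominance condition $|c|^2\ge 2|\beta|=2|c||c-a|$ that makes the banded matrix $H_n$ positive definite.

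So your proof sketch would fail at the point where you try to read off a diagonal $H_m$ from $\a_m=[I_m\,\,0]$; the recursion involves $\g_m$ as well, and the single perturbed entry $c(c-a)$ in $\g_m$ is exactly what creates the off-diagonal band in $H_n$. Once you replace the diagonal claim with the correct banded form and argue by diagonal dominance, the rest of your outline (boundedness of $\|\a_m\|,\|\b_m\|$, application of Theorem~\ref{cor:ONP3term} and Theorem~\ref{thm:zeros}) goes through as in the paper.
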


\begin{proof}
By the discussion right after the Theorem \ref{cor:ONP3term}, it is sufficient to prove that the matrices 
$H_n = \CL(\QQ_n \QQ_n^*)$ are positive definite for all $n \ge 0$. Because $\overline{\QQ_n} = 
J_{n+1} \QQ_n$, the matrix $H_n$ is centrohermitian; that is, $J_{n+1}H_n J_{n+1} = \overline{H_n}$. Using
this fact and taking complex conjugate of \eqref{gamma-alpha}, it is easy to see that 
$\overline{\a_{n-1}}J_{n+1} H_n J_{n+1} = H_{n-1}^\tr (\g_{n-1}^\vee)^\tr$. Consequently, since $\a_{n-1} = [I_n\,\,0]$ and
$J_n \a_{n-1}J_{n+1} = [0\,\, I_n]$, we conclude that the second identity below holds,
\begin{equation} \label{eq:HnHn-1}
  [I_n \, \, 0] H_n = J_n H_{n-1}^\tr \g_{n-1}^\tr J_{n+1} \quad\hbox{and}\quad [0 \,\, I_n] H_n = 
      J_n H_{n-1}^\tr  (\g_{n-1}^\vee)^\tr J_{n+1},
\end{equation}
where the first one follows directly from \eqref{gamma-alpha}. These two identities can be used to determine $H_n$ inductively. 
We can normalize the linear functional $\CL$ so that $H_0 =1$.
Using the explicit formulas of $\g_n$, it is easy to see that $H_1 = |a|^2 I_2$ and $H_2 = |a|^2 \a I_3$, where
$\a = |c|^2 - |a-c|^2$, which is positive definite since $\a >0$ by assumption. The next case is 
$$
 H_3 =  \left [ \begin{matrix} |c|^2 & 0 & 0 & \beta \\     0 & |c|^2 & 0 & 0 \\ 
    0 & 0& |c|^2 & 0 \\ \overline{\beta} & 0 & 0 & |c|^2 \end{matrix} \right], \qquad \b: = c (c-a),
$$
which is positive definite since $\det H_3 = |a|^2 \a^2 |c|^6 >0$ if $a \ne 0$ and $c \ne 0$, so that all its principle minors 
have positive determinant. Using the explicit formula of $\g_3$, it then follows from \eqref{eq:HnHn-1} that $H_4$
satisfies 
\begin{align*}
  [I_4\,\, 0] H_4 & = \a |a|^2 |c|^2 \left [ \begin{matrix} |c|^2 & 0 & 0 & \overline{\beta} & 0 \\  0 & |c|^2 & 0 & 0 & \beta \\ 
    0 & 0& |c|^2 & 0 & 0 \\ \beta & 0 & 0 & |c|^2 & 0\end{matrix} \right], \\
  [0\,\, I_4] H_4 & = \a |a|^2 |c|^2 \left [ \begin{matrix} 0 & |c|^2 & 0 & 0& \overline{\beta} \\ 0& 0 & |c|^2 & 0 & 0 \\ 
    \overline{\beta} & 0 & 0& |c|^2 & 0  \\  0 & \beta & 0 & 0 &|c|^2 \end{matrix} \right],
\end{align*}
which implies that $\beta$ is necessarily a real number and 
$$
  H_4 = \a |a|^2 |c|^2 \left [ \begin{matrix} |c|^2 & 0 & 0 & \beta & 0 \\  0 & |c|^2 & 0 & 0 & \beta \\ 
    0 & 0& |c|^2 & 0 & 0 \\ \beta & 0 & 0 & |c|^2 & 0\\  0 & \beta & 0 & 0 &|c|^2\end{matrix} \right].
$$ 
For $n > 4$, it is easy to conclude by induction and \eqref{eq:HnHn-1} that 
$$
  H_n = \a |a|^2 |c|^{2(n-3)} \left [ \begin{matrix} |c|^2 & 0 & 0 & \beta & & \bigcirc \\
      0 & |c|^2 & 0 & \ddots & \ddots &  \\ 
    0 & 0& \ddots & \ddots & \ddots & \beta \\
     \beta & \ddots & \ddots & \ddots  & 0 &0 \\  & \ddots & \ddots & 0 & |c|^2  & 0 \\    
    \bigcirc  & & \beta & 0 & 0 &|c|^2\end{matrix} \right].
$$
By assumption, $|c|^2 \ge 2 |\b| = 2 |c| |c-a|$, which implies that $H_n$ is diagonal dominant with its
first and the last rows strictly diagonal dominant. Consequently, $H_n$ is positive definite. 
\end{proof}

Numerical computation indicates that the condition $|c| \ge 2 |c-a|$ is sharp for the positive definiteness of $H_n$ for 
all $n \in \NN$, hence, sharp for the polynomials in $\QQ_n$ being orthogonal with respect to a positive measure. 
%However, this condition is not sharp for polynomials $\QQ_n$ having all real common zeros. Indeed, numerical 
%computation has shown that the polynomials in $\QQ_n$ have indeed complex zeros for parameters $a$ and $c$ 
%a bit far out the range of parameters, but they can still have real common zeros for parameters in a close neighborhood 
%of the range of parameters. 

\begin{cor}\label{cor:Gaussian}
If $a$ and $c$ satisfies the assumption of the theorem, then there is a finite positive Borel measure $d \mu_{a,c}$
with compact support in $\RR^2$ with respect to which the polynomials $Q_k^m$ are orthogonal. Furthermore,
for the integral against $d\mu_{a,c}$, the Guassian cubature rule of degree $2m-1$ exists for all $m \in \NN$. 
\end{cor}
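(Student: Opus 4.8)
The plan is to read the measure off Theorem~\ref{cor:ONP3term} and the cubature off the theorem above together with Theorem~\ref{thm:zeros}. The theorem above shows that the matrices $H_n=\CL(\QQ_n\QQ_n^*)$ attached to the quasi-definite functional $\CL$ of Proposition~\ref{prop:a=!0} are positive definite for every $n\ge 0$, so $\CL$ is positive definite. First I would pass to an orthonormal system: put $\wh\QQ_n:=H_n^{-1/2}\QQ_n$, using the positive Hermitian square root. Since $H_n$ is centrohermitian, so is $H_n^{-1/2}$ (the map $X\mapsto J\overline{X}J$ is a $*$-automorphism fixing $H_n^{-1}$, hence fixing its positive square root), and therefore $\overline{\wh\QQ_n}=J_{n+1}\wh\QQ_n$ persists; thus $\{\wh\QQ_n\}$ is an orthonormal basis of $\CV_n^2(\CC)$ satisfying \eqref{eq:PPconjugate}, and by Theorem~\ref{cor:ONP3term} its three--term coefficients satisfy $\wh\g_{m}=(\wh\a_{m-1}^*)^\vee$. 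From $\QQ_n=H_n^{1/2}\wh\QQ_n$ and \eqref{eq:3termQac} one reads off $\wh\a_n=H_n^{-1/2}[I\,\,0]H_{n+1}^{1/2}$ and $\wh\b_n=H_n^{-1/2}\b_n H_n^{1/2}$.

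The substantive step is to verify the boundedness condition \eqref{eq:norm-cpt}, i.e. $\sup_n\|\wh\a_n\|<\infty$ and $\sup_n\|\wh\b_n\|<\infty$ (then $\|\wh\g_m\|=\|\wh\a_{m-1}\|$ needs nothing more). For $\wh\a_n$ this follows cleanly from the explicit form of $H_n$ found in the proof above: the leading $(n+1)\times(n+1)$ principal submatrix of $H_{n+1}$ equals $|c|^2 H_n$ — for $n\ge 4$ this is immediate from the displayed formula $H_n=\a|a|^2|c|^{2(n-3)}M_n$, the leading block of $M_{n+1}$ being $M_n$, and the finitely many small cases are checked by hand — whence $\wh\a_n\wh\a_n^*=H_n^{-1/2}\big([I\,\,0]H_{n+1}[I\,\,0]^\tr\big)H_n^{-1/2}=|c|^2 I_{n+1}$, so $\|\wh\a_n\|=|c|$. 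For $\wh\b_n$ I would use that $\b_n$ has uniformly bounded entries (only $c$ and $\bar c-\bar a$ occur) together with a bound on the condition number of $H_n$: when $|c|>2|c-a|$ the spectrum of $M_n$ lies in $[|c|^2-2|c||c-a|,\,|c|^2+2|c||c-a|]$ and stays uniformly away from $0$, giving a uniform bound on the (scaled) norms of $H_n^{\pm 1/2}$ and hence on $\|\wh\b_n\|$. Once \eqref{eq:norm-cpt} holds, Theorem~\ref{cor:ONP3term} produces a finite positive Borel measure $d\mu_{a,c}$ with compact support in $\RR^2$ and $\CL f=\int f\,d\mu_{a,c}$; then $Q_k^m$, or equivalently via \eqref{eq:PvsQ} the associated real basis of $\CV_m^2$, is orthogonal with respect to $d\mu_{a,c}$.

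For the cubature, the theorem above already gives that $\QQ_m$ has $\dim\Pi_{m-1}^2=\binom{m+1}{2}$ real common zeros, all simple, hence distinct. As recalled in the introduction (and as contained in Theorem~\ref{thm:zeros}), a Gaussian cubature rule of degree $2m-1$ for $d\mu_{a,c}$ exists exactly when its degree--$m$ orthogonal polynomials have $\binom{m+1}{2}$ real, distinct common zeros; applied to $d\mu_{a,c}$ this yields the Gaussian cubature rule of degree $2m-1$ for every $m\in\NN$, with nodes the common zeros of $\QQ_m$ and (automatically) positive weights.

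The main obstacle I anticipate is precisely the boundedness \eqref{eq:norm-cpt} of the normalized coefficients, i.e. upgrading ``$\CL$ positive definite'' to ``$\CL$ is integration against a compactly supported positive measure''. For the bound on $\wh\b_n$ the crude condition--number estimate degenerates at the borderline parameters $|c|=2|c-a|$, where $H_n$ becomes asymptotically singular; handling that case cleanly — by a sharper estimate exploiting the band structure of $\b_n H_n$, or by a limiting argument from $|c|>2|c-a|$ — is the delicate point. Everything else is bookkeeping resting on the positivity of the $H_n$ established above and on Theorems~\ref{cor:ONP3term} and~\ref{thm:zeros}.
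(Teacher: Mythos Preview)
Your outline is the paper's: invoke Theorem~\ref{cor:ONP3term} for the measure, then the maximal real common zeros from the theorem for the cubature. The paper's entire proof is one sentence --- ``By the explicit formula of the three--term relations, it is easy to see that \eqref{eq:norm-cpt} holds, which implies the existence of $d\mu$ by Theorem~\ref{cor:ONP3term}'' --- and it appears to read \eqref{eq:norm-cpt} directly off the monic coefficients $\a_m=[I\,\,0]$, $\b_m$, $\g_m$ of Proposition~\ref{prop:a=!0}, whose entries are visibly bounded independently of $m$. It does not explicitly pass to an orthonormal system.

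You are right that Theorem~\ref{cor:ONP3term}, as stated, concerns orthonormal polynomials, so your normalization step is the honest thing to do. Your computation $\wh\a_n\wh\a_n^*=|c|^2 I_{n+1}$ for large $n$ is correct and clean (the leading principal block of $M_{n+1}$ really is $M_n$, and the scalar prefactors match); note that for small $n$ the leading block of $H_{n+1}$ is not literally $|c|^2 H_n$ (e.g.\ $n=0$ gives $|a|^2$, not $|c|^2$), but as you say those finitely many cases are harmless. The obstacle you flag at the borderline $|c|=2|c-a|$ is genuine: the Gershgorin lower bound for $M_n$ collapses to $0$ there, so the crude condition-number control of $\wh\b_n=H_n^{-1/2}\b_n H_n^{1/2}$ fails. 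The paper's one-line argument does not resolve this either. So in the open range $|c|>2|c-a|$ your argument is complete and more rigorous than the paper's; at the boundary both proofs, as written, leave the same gap, and your suggested remedies (exploit the band structure of $\b_n H_n$, or approximate from the interior) are the natural ways forward.
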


\begin{proof}
By the explicit formula of the three--term relations,  it is easy to see that  \eqref{eq:norm-cpt} holds, which
implies the existence of $d\mu$ by Theorem \ref{cor:ONP3term}. 
\end{proof}

As mentioned before,  only two families of integrals for which Gaussian cubature rules are known to exist in the 
literature. One of them is the integral over the deltoid with respect to $w_{1/2}$ in \eqref{eq:cheby-weight}, which 
corresponds to the case $a = c$ in the corollary. Our result in the above corollary shows that the 
Gaussian cubature rules exist for a family of measures that includes $w_{1/2}$ as a special case, which corresponds
to $a = c =1$ and with $(x,y)$ dilated by $3$ as shown in Corollary \ref{cor:3.6}. We do not know, however, the explicit 
formula for the measure when $a \ne c$. To get some sense of the affair, let us depict the common zeros of the orthogonal
polynomials when $c=1$ and $a$ is a parameter, and we dilate $(x,y)$ by 3. By the Corollary \ref{cor:Gaussian}, the
common zeros generate a Gaussian cubature rule if $1/2 \le a \le 3/2$. In Figure \ref{figure:nodes}  
\begin{figure}[ht]
\begin{center} 
 \includegraphics[scale=0.45]{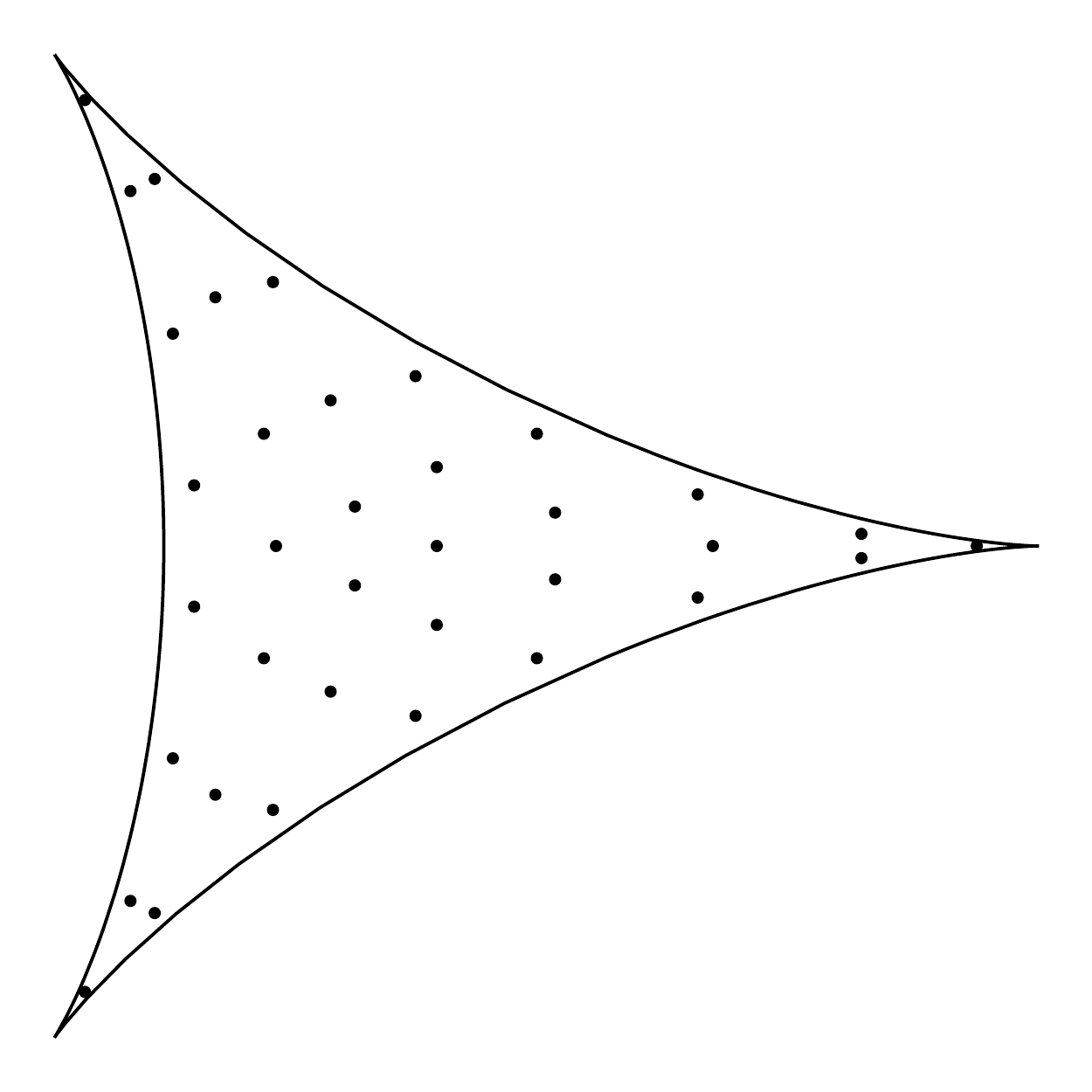} \quad  \includegraphics[scale=0.45]{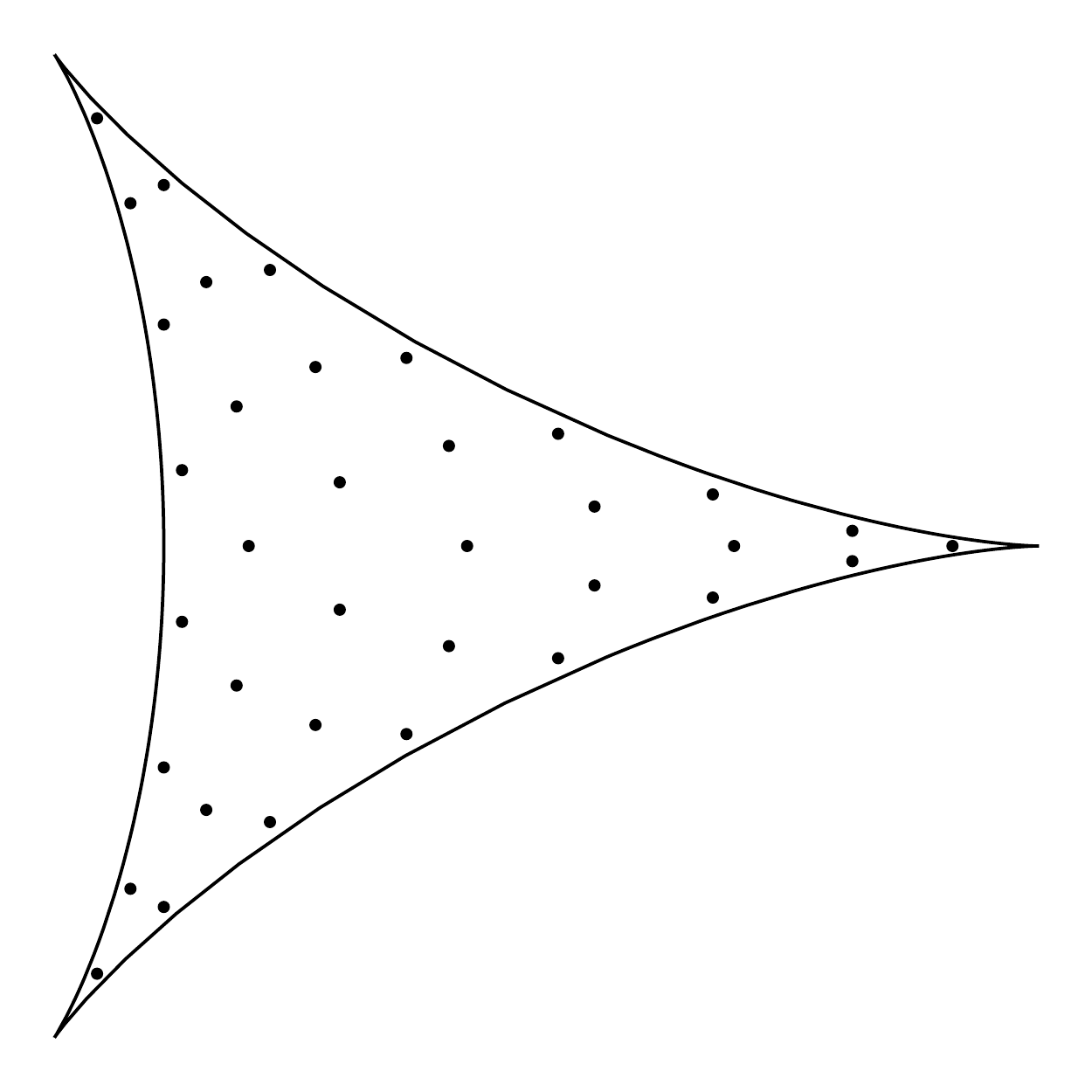} \\
 \includegraphics[scale=0.45]{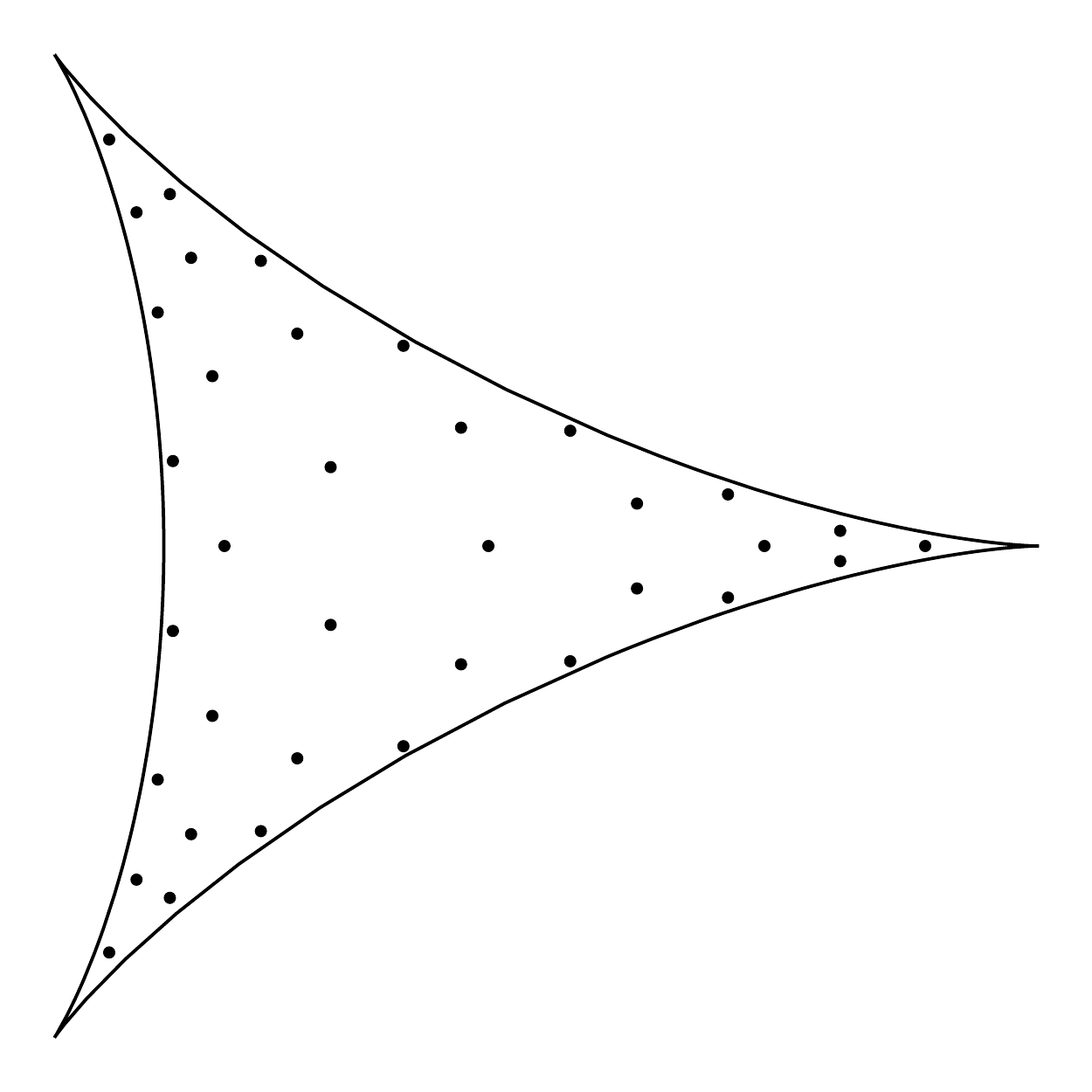} \quad  \includegraphics[scale=0.45]{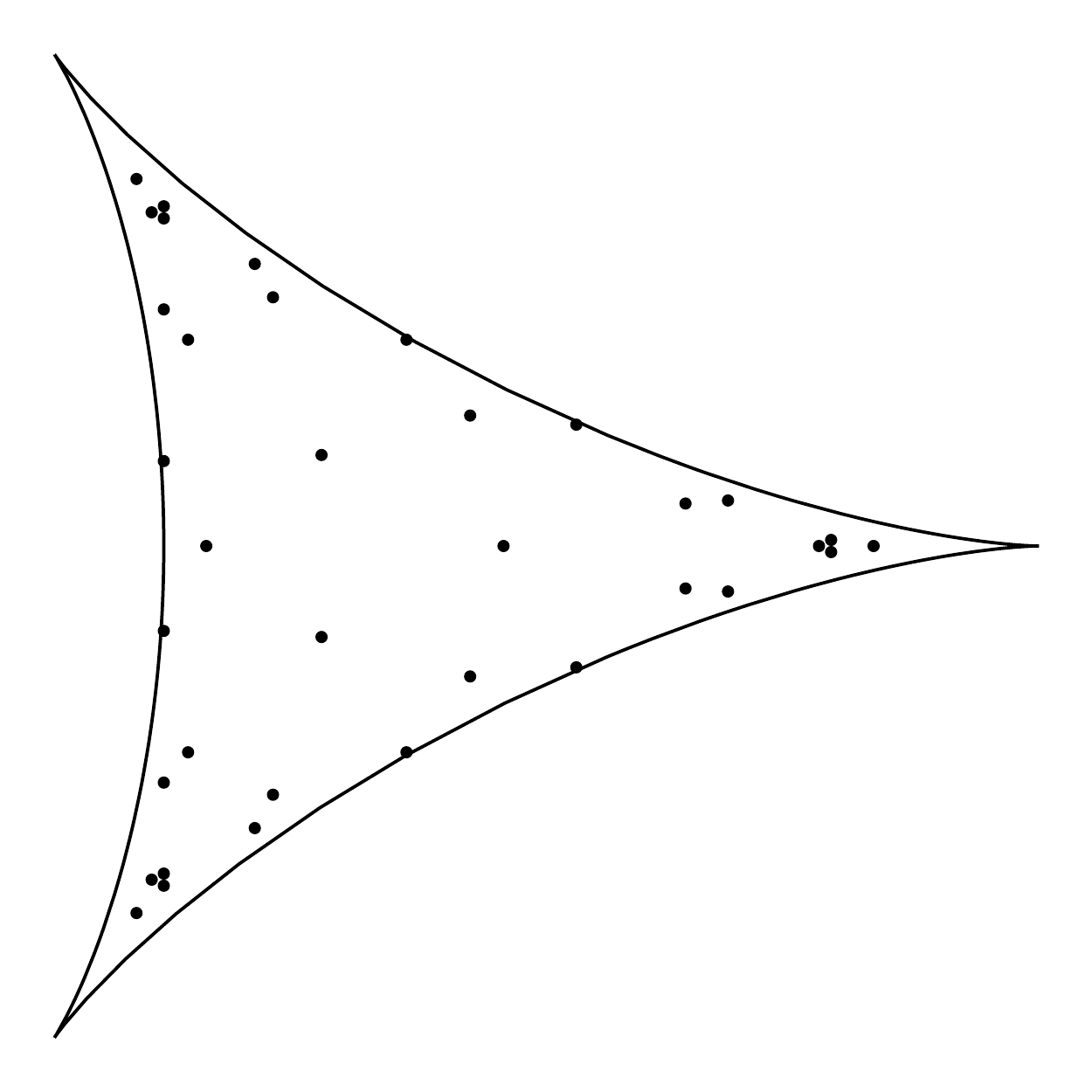}
\caption{Clockwise from the upper left corner: nodes for $m = 8$ with $a = 1/2$, $a=1$, $a=3/2$ and $a =2$}
\label{figure:nodes} 
\end{center} 
\end{figure} 
we depict the common zeros for orthogonal polynomials of degree 8 for $a=1/2, 1, 3/2, 2$, respectively. 

The case $a = 1$ corresponds to the Gaussian cubature rule for $w_{1/2}$. The case $a = 1/2$ and $a=3/2$ 
correspond to the boundary cases for which the existence of a Gaussian cubature rule is guaranteed by 
Corollary \ref{cor:Gaussian}. The figures show that the corresponding measures in these two cases 
are likely supported on the same region, and the points are distributed more densely toward the boundary as 
$a$ increases, which indicates that the corresponding measures may behavior like $w_{a/2} (x,y)dx dy$, where 
$w_\a$ is defined in \eqref{eq:cheby-weight}. However, the coefficients of the three--term relation of the Chebyshev 
polynomials of the first kind (\cite{LSX}), which corresponds to $w_{-1/2}$ and does not admit a Gaussian cubature
rule, are of different forms from those in \eqref{eq:3termQac}. This seems to indicate that the measures are not 
exactly $w_{a/2}$. For the case $a=2$, outside the range in Corollary \ref{cor:Gaussian},
the figure shows that the points appear to cluster together. Further test shows that the common zeros are no longer all 
real nor all inside the region for larger $a$, say $a = 5/2$. 

\iffalse
This example shows that the polynomials for centrohermitian matrix do not all have real common zeros. Hence, 
Conjecture 2 does not extend to centrohermitian matrices. In its place we state the following conjecture:

\begin{conj}
If $A$ is a centrohermitian matrix of $n \times n+m$, then the set of common zeros of all $P_I(z_0,\ldots,z_n)$ with 
$|I| = m$ is a finite subset of $\CC^{n+1}$ of cardinality $\binom{m+n}{n+1}$ counting multiplicities.   
\end{conj}  

This connection is not a consequence of Proposition \ref{prop:basic}, which requires the variables are independent complex variables. 
\fi

\bigskip\noindent
{\bf Acknowledgement.} The author thanks Professor Boris Shapiro for helpful discussions, and for the referee and
the editor for their helpful comments and suggestions.

\end{document}